\documentclass{mc}
\renewcommand\datereceived{}
\renewcommand\dateaccepted{}

\usepackage{amsmath, amsthm, amssymb, verbatim, bbm, color, graphics,  geometry}

\usepackage{float}
\usepackage{tabularx}
\usepackage{multirow} 
\usepackage{booktabs}
\usepackage{makecell}
\usepackage{mathrsfs}
\usepackage{caption}
\usepackage{subcaption}
\usepackage{rotating}
\usepackage{braket}
\usepackage{listings}
\usepackage{enumitem}
\usepackage{mathrsfs} 

\usepackage{booktabs} 
\usepackage{placeins}
\usepackage[normalem]{ulem}

\usepackage{xcolor,sectsty}
\definecolor{astral}{RGB}{46,116,181}
\definecolor{darkslategray}{rgb}{0.18, 0.31, 0.31}
\definecolor{warmblack}{rgb}{0.0, 0.46, 0.36}
\sectionfont{\color{astral}\fontsize{12}{12}\selectfont}
\subsectionfont{\color{astral}\fontsize{11}{12}\selectfont}
\subsubsectionfont{\color{astral}\fontsize{11}{11}\selectfont}

\usepackage{hyperref}
\usepackage{bm}

\begin{document}
	
    	\markboth{D. Gupta {\em et al.}}{QPINNs for Solving Integro and Fractional PDEs}
	\title{{\color{warmblack} Quantum Physics-Informed Neural Networks for Solving Integro and Fractional PDEs}}

	\author[D. Gupta {\em et al.}]{{\bf Deepak Gupta}\affil{1} and {\bf Ratikanta Behera}\affil{1}\corrauth}
 
	\address{\affilnum{1}\ {Department of Computational and Data Sciences, Indian Institute of Science, Bangalore, 560012, India}}
	
	\emails{{\tt deepakgupta1@iisc.ac.in} (D. Gupta), {\tt ratikanta@iisc.ac.in} (R. Behera)}
	%

\begin{abstract}
Quantum neural networks have emerged as powerful models for approximating nonlinear functions. Yet their use in solving integro-differential equations (IDEs) and fractional integro-partial differential equations (FIPDEs), which involve inherently nonlocal operators, remains unexplored. This work introduces a quantum physics-informed neural network (QPINN) framework that combines a quantum neural network with the governing equations of general nonlinear IDEs and FIPDEs. The proposed quantum network uses an affine feature map and variational quantum circuits to produce trial solutions with explicit trigonometric structure. We prove a quantitative $L^{2}(\mu)$ universal approximation theorem for this architecture, achieving a convergence rate of $\mathcal{O}(n^{-1/2})$. This extends classical Fourier approximation theory to quantum circuits for physics-informed learning. We propose two QPINN variants: the numerical-quadrature QPINN (N-QPINN), which handles nonlocal integrals and fractional operators via high-order numerical quadrature while computing local derivatives through automatic differentiation of quantum trial solutions; and the auxiliary-function QPINN (A-QPINN), which eliminates numerical quadrature by introducing auxiliary variables that reformulate each integro-differential equation as an equivalent coupled system of partial differential equations, enabling a multi-output quantum neural network to simultaneously represent the solution and its associated variables. A series of numerical experiments demonstrates that the proposed QPINN framework accurately captures the behavior of nonlinear IDEs and FIPDEs and outperforms classical physics-informed neural networks.
\end{abstract}
\keywords{Quantum Computing, Physics-informed quantum neural network, Quantum machine learning, Integro-differential equation, Fractional partial differential equations.}

\ams{35R09, 35R11, 68T07, 68Q09, 81P68.}

\maketitle

\section{Introduction}\label{sec_intro}
The rapid evolution of quantum computing has ushered in a new paradigm for tackling high-dimensional computational challenges in science and machine learning. By leveraging quantum mechanical principles such as superposition, entanglement, and unitary evolution, quantum processors offer computational behaviors that are fundamentally distinct from those of classical systems. Early algorithmic breakthroughs, including Shor’s factoring algorithm and Grover’s search, established the potential for exponential and quadratic quantum speedups.  More recently, advances such as the Harrow, Hassidim, and Lloyd (HHL) algorithm for solving linear systems~\cite{harrow2009quantum} have further revealed the potential of quantum devices for numerical analysis and scientific computing. Complementing these theoretical advances, the emergence of variational quantum algorithms (VQAs)~\cite{bravo2023variational} and quantum machine learning (QML) models~\cite{biamonte2017quantum} has created a practical pathway for hybrid quantum-classical computation on noisy intermediate-scale quantum (NISQ) hardware. Notably, quantum neural networks (QNNs) exhibit expressive properties fundamentally distinct from those of classical neural networks, including efficient representation of highly oscillatory, multiscale, and high-dimensional structures~\cite{abbas2021power, schuld2021effect}. These capabilities make QNNs promising candidates for enhancing scientific machine learning models.

On the classical computing side, there has been a renaissance in machine learning driven by deep neural networks. This resurgence has been enabled by advances in optimization techniques, software ecosystems, and specialized hardware accelerators, allowing neural networks to efficiently process continuous vectors and tensors in high-dimensional spaces. In particular, the development of physics-informed neural networks (PINNs)~\cite{raissi2019physics} has reshaped computational approaches for solving partial differential equations (PDEs). By embedding governing physical laws directly into the training objective, PINNs eliminate the need for labeled solution data and provide a mesh-free approximation framework through automatic differentiation. PINNs have achieved notable success in fields ranging from fluid dynamics~\cite{Cai2022Review, He2020AWRes, Mao2020HS, Zhu2021CM} and heat transfer~\cite{Cai2021Heat, Laubscher2021} to biomedical engineering~\cite{Arzani2021, SahliCostabal2020} and materials science~\cite{Chen2020OE, Goswami2020, Yin2021Thrombus}. Despite these advantages, standard PINNs often struggle when applied to stiff dynamics, sharp gradients, multiscale phenomena, and high-dimensional parameter spaces~\cite{Ying2019,karniadakis2021physics}. Increasing network expressivity can partially address these issues, but typically at the cost of higher computational expense, increased risk of overfitting, and unstable optimization~\cite{Rahaman2019}. These difficulties are more severe for integro-differential equations (IDEs) and fractional integro-partial differential equations (FIPDEs), which involve nonlocal operators that encode memory effects, anomalous diffusion, and long-range interactions. Several PINN extensions have attempted to mitigate these challenges. Frameworks such as DeepXDE~\cite{Lulu-DeepXDE2021}, fractional PINNs (fPINNs)~\cite{Pang2019fPINNs}, and auxiliary PINNs (A-PINNs)~\cite{Yuan2022APINN} introduce discretization techniques or auxiliary variables to better accommodate nonlocal and fractional operators. While these approaches alleviate some computational difficulties, they continue to suffer from the inherent expressivity of classical neural architectures and their difficulty in representing multiscale nonlocal behavior.These persistent limitations motivate the exploration of quantum machine learning models~\cite{Benedetti2019, PhysRevASchuld2020}, whose enhanced expressive capacity and ability to generate rich families of oscillatory features offer a compelling alternative for learning complex solutions~\cite{abbas2021power}.

Recent theoretical advances, including universal approximation theorems(UAT) for QNNs~\cite{goto2022universal, GononUAT2025} and analyses of quantum feature embeddings~\cite{lloyd2020quantum}, provide rigorous justification for applying QNNs to scientific computation. Leveraging entanglement, quantum parallelism, and high-frequency feature generation, QNNs offer expressive advantages over classical networks in regimes where PINNs often fail~\cite{du2021learnability, abbas2021power, schuld2021effect}. This has led to the emergence of physics-informed quantum neural networks (PIQNNs), with early demonstrations by Markidis et al.~\cite{markidis2022physics} and subsequent hybrid or fully quantum variants addressing nonlinear PDEs~\cite{trahan2024quantum, sedykh2024hybrid, panichi2025quantum, farea2025qcpinn, leong2025hybrid}. However, existing PIQNN approaches focus exclusively on PDEs governed by local differential operators. However, the application of quantum-enhanced learning to equations governed by nonlocal operators, specifically nonlinear integro-differential equations (IDEs) and fractional integro-partial differential equations (FIPDEs), remains unexplored.

In this work, we establish the universal approximation capabilities of a specific class of QNNs. We explicitly construct a parameterized quantum circuit and prove that an appropriate choice of circuit parameters enables accurate function approximation. More precisely, we show that a QNN with $\mathcal{O}(\varepsilon^{-2})$ trainable parameters and $\mathcal{O}(\lceil \log_2(\varepsilon^{-1}) \rceil)$ qubits suffices to approximate functions with integrable Fourier transforms to within a prescribed error tolerance $\varepsilon>0$. Notably, improving the approximation accuracy requires only a logarithmic growth in the number of qubits, demonstrating the favorable scalability of the proposed quantum architecture. This approximation result reveals that QNNs naturally generate finite trigonometric feature expansions with explicit $\mathcal{L}^2$-error guarantees. These expansions are closely analogous to classical Fourier feature representations, but are governed by learnable quantum parameters arising from the underlying quantum circuit architecture. This property provides a rigorous theoretical foundation for employing QNNs as expressive function approximators in scientific machine learning. Building on this approximation theory, we develop the first quantum physics-informed neural network (QPINN) framework designed explicitly for nonlinear integro-differential equations (IDEs) and fractional integro-partial differential equations (FIPDEs). Leveraging the trigonometric feature structure induced by QNNs, we introduce two complementary formulations:
\begin{enumerate}
    \item Numerical-Quadrature QPINN (N-QPINN): A hybrid strategy that evaluates integral operators using high-order numerical quadrature while preserving automatic differentiation for local differential operators.
    \item Auxiliary-Function QPINN (A-QPINN): A quadrature-free formulation that replaces nonlocal integral operators with auxiliary functions satisfying differentiable consistency relations, enabling mesh-free and fully differentiable learning of nonlocal behavior.
\end{enumerate}
Together, these formulations provide a unified quantum framework for addressing a broad class of nonlinear integro-differential and fractional integro-partial differential equations. To the best of our knowledge, this work represents the first systematic development of QPINN methodologies for such nonlocal models. The main contributions of this work can be summarized as follows:
\begin{itemize}
    \item We establish universal approximation results for a class of QNNs with explicit $\mathcal{L}^2$-error bounds.
    \item We derive complexity estimates showing that the required number of qubits scales logarithmically with the target accuracy.
    \item We propose two QPINN approaches, N-QPINN and A-QPINN, that enable quadrature-based and quadrature-free solution strategies, respectively, for nonlinear IDEs and FIPDEs.
    \item We validate the proposed methods through systematic numerical experiments on representative nonlinear IDEs and FIPDEs.
\end{itemize}
The remainder of the paper is organized as follows. Section~\ref{sec_problem-set} formulates the forward and inverse problems for IDEs and FIPDEs. Section~\ref{sec_bg} reviews classical PINNs and introduces the QNN architectures. Section~\ref{sec_approximation-error} provides a detailed analysis of the approximation capabilities of QNNs, including an explicit $\mathcal{L}_2$-error bound. Section~\ref{methodology} presents the proposed N-QPINN and A-QPINN methodologies. Section~\ref{num-exp} reports numerical experiments, and Section~\ref{sec_concl} concludes with remarks and potential future research directions.\\
\textbf{Notation:} We denote $C(\mathbb{R}^d)$ for the space of continuous real-valued functions on $\mathbb{R}^d$. For $1 \le p \le \infty$, the Lebesgue space $L^p(\mathbb{R}^d)$ consists of measurable functions with finite norm $\|f\|_{L^p}$. In particular, $\|f\|_{L^1}=\int_{\mathbb{R}^d}|f(x)|\,\mathrm{d}x$. For $f\in L^1(\mathbb{R}^d)$, the Fourier transform is defined by 
$\widehat f(\xi)=\int_{\mathbb{R}^d} f(x)e^{-i\,\xi\cdot x}\,\mathrm{d}x$, 
and the condition $\widehat f\in L^1(\mathbb{R}^d)$ means $\int_{\mathbb{R}^d}|\widehat f(\xi)|\,\mathrm{d}\xi<\infty$. We use $\|\cdot\|$ for the Euclidean norm in $\mathbb{R}^d$ and $\|\cdot\|_{L^2(\mu)}$ for the $L^2$ norm with respect to a probability measure $\mu$.
\section{Problem Statement}\label{sec_problem-set}
We consider a unified class of integro–differential and fractional integro–partial differential equations that arise in diverse applications, including fluid mechanics, heat transfer, viscoelasticity, anomalous transport, and multiscale physical systems. The following problem statements provide the general mathematical setting addressed by the proposed QPINN framework.
\subsection{Model Problem I: General Nonlinear IDE}
Let $\Omega\subset\mathbb{R}^d$ be a bounded domain with boundary $\partial\Omega$, and let $\boldsymbol{x}\in\Omega$. A nonlinear IDE can be written as
\begin{equation}\label{model-general-IDE}
\begin{cases}
\begin{aligned}
\mathscr{L}[u(\boldsymbol{x})]
&= f(\boldsymbol{x})
+ \lambda \int_{\mathcal{D}(\boldsymbol{x})}
\mathscr{K}(\boldsymbol{x},\boldsymbol{z})\,\mathcal{N}(u(\boldsymbol{z}))\, d\boldsymbol{z},
&& \boldsymbol{x}\in\Omega, \\[4pt]
\partial^{\boldsymbol{k}} u(\boldsymbol{x}_0)
&= a_{\boldsymbol{k}},
\quad |\boldsymbol{k}| < N,\; \boldsymbol{k}\in\mathbb{Z}_+^{d},
&& \text{(initial conditions)}, \\[4pt]
\mathscr{B}_j[u(\boldsymbol{x})]
&= g_j(\boldsymbol{x}),
\quad \boldsymbol{x}\in\partial\Omega,\; j=1,\dots,n,
&& \text{(boundary conditions).}
\end{aligned}
\end{cases}
\end{equation}
where $\mathscr{L}$ is a differential operator of order $N$,  
$\mathscr{K}$ is a kernel function, $\mathcal{D}(\boldsymbol{x})$ denotes a possibly $\boldsymbol{x}$-dependent integration region, and $\mathcal{N}$ is a nonlinear map acting on $u$. The operators $\mathscr{B}_j$, $j=1,\dots,n$, define boundary conditions prescribed on $\partial\Omega$.
\subsection{Model Problem II: General System of Nonlinear IDE}
Many applications involve multiple interacting fields, leading to coupled nonlocal systems of the form
\begin{equation}\label{model-general-system}
\begin{cases}
\displaystyle
\mathscr{L}_i\bigl[u_1,\dots,u_m\bigr](\boldsymbol{x})=
f_i(\boldsymbol{x})
+ \lambda_i \displaystyle\int_{\mathcal{D}(\boldsymbol{x})}
\mathscr{K}_i(\boldsymbol{x},\boldsymbol{z})\,
\mathcal{N}_i\!\left(u_1(\boldsymbol{z}),\dots,u_m(\boldsymbol{z})\right)\, d\boldsymbol{z},\\
\hfill \boldsymbol{x}\in\Omega,\; i=1,\dots,m,\\
\displaystyle
\partial^{\boldsymbol{k}} u_i(\boldsymbol{x}_0)
= a_{i,\boldsymbol{k}}, 
\qquad |\boldsymbol{k}|<N_i,\;\boldsymbol{k}\in\mathbb{Z}_+^d,
\;\;\text{(initial conditions)}, \\[6pt]
\displaystyle
\mathscr{B}_{i,j}\bigl[u_i(\boldsymbol{x})\bigr]
= g_{i,j}(\boldsymbol{x}), 
\qquad \boldsymbol{x}\in\partial\Omega,\;\; j=1,\dots,n_i,
\hfill \text{(boundary conditions).}\\
\end{cases}
\end{equation}
where $\mathscr{L}_i$ are differential operators associated with each component $u_i$, 
$\mathscr{K}_i$ are kernel functions, and $\mathcal{N}_i$ denote nonlinear maps coupling the fields $u_1,\dots,u_m$. The operators $\mathscr{B}_{i,j}$, $j=1,\dots,n_i$, define the boundary conditions for the $i$-th component on $\partial\Omega$.
\subsection{Model Problem III: General Nonlinear Fractional Integro PDE}
To capture memory effects and anomalous diffusion, we consider equations involving Caputo fractional derivatives and fractional integrals:
\begin{equation}\label{model-general-FIPDE}
\begin{cases}
\displaystyle
\mathscr{L}\bigl[u(\boldsymbol{x},t)\bigr]
+ \sum_{j=1}^{J} D_t^{\alpha_j} u(\boldsymbol{x},t)
+ \sum_{k=1}^{K} \mathcal{I}_t^{\beta_k} u(\boldsymbol{x},t) \\[4pt]
\displaystyle
\qquad\qquad\qquad\qquad= f(\boldsymbol{x},t)+ \lambda \displaystyle\int_{\mathcal{D}(\boldsymbol{x},t)}
\mathscr{K}\bigl((\boldsymbol{x},t),(\boldsymbol{z},s)\bigr)\,
\mathcal{N}\!\left(u(\boldsymbol{z},s)\right)\, d\boldsymbol{z}\,ds, \\
\qquad\qquad\qquad\qquad\qquad\qquad\qquad\qquad\qquad\qquad\qquad\quad(\boldsymbol{x},t)\in\Omega\times(0,T), \\
\displaystyle 
u(\boldsymbol{x},0)=u_0(\boldsymbol{x}), 
\qquad \partial_t u(\boldsymbol{x},0)=u_1(\boldsymbol{x}),
\qquad \boldsymbol{x}\in\Omega,\\[10pt]
\displaystyle 
\mathscr{B}_j\bigl[u(\boldsymbol{x},t)\bigr]=g_j(\boldsymbol{x},t),
\; j=1,\dots,n, \quad \boldsymbol{x}\in\partial\Omega,
\quad\text{(boundary conditions).}
\end{cases}
\end{equation}
where $\mathscr{L}$ denotes a spatial differential operator acting on $u(\boldsymbol{x},t)$, 
$D_t^{\alpha_j}$ represents the Caputo fractional derivative of order $\alpha_j\in(0,1)$, 
and $\mathcal{I}_t^{\beta_k}$ denotes the fractional integral operator of order $\beta_k\in(0,1)$. 
The function $\mathscr{K}$ is a kernel describing the nonlocal interaction over the space–time region 
$\mathcal{D}(\boldsymbol{x},t)$, while $\mathcal{N}$ is a nonlinear map acting on the solution $u$. 
The operators $\mathscr{B}_j$, $j=1,\dots,n$, define the boundary conditions on $\partial\Omega$. 
The Caputo fractional derivative of order $\alpha$ and the fractional integral of order $\beta$ are defined as
\begin{equation}\label{def-capto}
\begin{aligned}
D_t^{\alpha} u(\boldsymbol{x},t)
&= \frac{1}{\Gamma(1-\alpha)}
\int_{0}^{t} \frac{\partial u(\boldsymbol{x},s)}{\partial s}(t-s)^{-\alpha}\, ds, \\[6pt]
\mathcal{I}_t^{\beta} u(\boldsymbol{x},t)
&= \frac{1}{\Gamma(\beta)}
\int_{0}^{t} (t-s)^{\beta-1} u(\boldsymbol{x},s)\, ds.
\end{aligned}
\end{equation}
These models combine local differential operators with nonlocal integral and fractional operators, leading to solution behaviors that are difficult to approximate accurately. The objective of this work is to develop QPINN formulations capable of solving the general models \eqref{model-general-IDE}, \eqref{model-general-system}, and \eqref{model-general-FIPDE}. To this end, we introduce two complementary methodologies, N-QPINN and A-QPINN, each tailored to different types of nonlocal structures. The A-QPINN formulation is based on auxiliary functions and avoids numerical quadrature by reformulating integral operators into equivalent differential systems; however, it is not well suited for fractional operators, which have inherently nonlocal, weakly singular kernel structures. In contrast, the N-QPINN approach employs high-order numerical quadrature to accurately handle such nonlocal and fractional terms. We demonstrate the effectiveness of these approaches on representative benchmark problems presented in the numerical section. In the next section, we review the necessary background on classical PINNs and introduce the quantum neural network architectures underlying the proposed QPINN framework.


\section{Background and Related Works}\label{sec_bg}
In this section, we first review the classical PINNs framework~\cite{raissi2019physics} for solving PDEs and summarize existing extensions designed for integro-differential and fractional PDEs. We then provide a brief introduction to QNNs and outline the QPINN approach that forms the foundation of our proposed methodology.
\subsection{Physics Informed Neural Network (PINN)}
Physics-informed neural networks (PINNs) are a class of deep learning models that incorporate the governing physical laws of a system into the training process of a neural network. They have been widely used for solving differential equations, especially PDEs, without relying solely on traditional discretization schemes. To illustrate the core idea, consider a general PDE of the form:
\begin{equation}\label{gen-pde}
\mathscr{L}\left[u(\boldsymbol {x})\right] = f(\boldsymbol {x}), \quad \boldsymbol {x} \in \Omega \subset \mathbb{R}^d,
\end{equation}
where $u(\boldsymbol {x})$ is the unknown solution, $\mathscr{L}\left[\cdot\right]$ denotes a differential operator, and $\Omega$ is the spatial domain. The PDE is typically subject to the boundary conditions $\mathcal{B}\left[u(\boldsymbol {x})\right]= b(\boldsymbol {x}), \quad \boldsymbol {x} \in \partial\Omega$. In PINNs, a neural network $u_{\theta}(\boldsymbol {x})$, parameterized by weights $\theta$, is trained to approximate $u(\boldsymbol {x})$. The network output is differentiated with respect to $\boldsymbol {x}$ using automatic differentiation to evaluate the PDE residual, defined as:
\begin{equation*}
\mathcal{R}_{\theta}(\boldsymbol {x}) = \mathscr{L}\left[u_{\theta}(\boldsymbol {x})\right] - f(\boldsymbol {x}).
\end{equation*}
The total loss function is then defined as the combination of two contributions:
\begin{equation}
\mathcal{L}_{\text{Total}} = \mathcal{L}_{PDE} + \mathcal{L}_{BC},
\end{equation}
where
\begin{equation*}
\left\{
\begin{aligned}
\mathcal{L}_{PDE} &= \frac{1}{N_{r}} \sum_{i=1}^{N_{r}} \Big| \hat{\mathcal{R}}(\boldsymbol {x}_{r}^{i}) \Big|^{2}, \\[6pt]
\mathcal{L}_{BC} &= \frac{1}{N_{BC}} \sum_{i=1}^{N_{BC}} \Big| \mathcal{B}[u_\theta](\boldsymbol {x}_{b}^{i}) - g(\boldsymbol {x}_{b}^{i}) \Big|^{2}.
\end{aligned}
\right.
\end{equation*}
Here, $\{\boldsymbol {x}_{r}^{i}\}_{i=1}^{N_r}$ are the collocation points for enforcing the PDE, and $\{\boldsymbol {x}_{b}^{i}\}_{i=1}^{N_{BC}}$ are the boundary condition points. Minimizing this loss ensures that the neural network approximation $u_{\theta}(\boldsymbol {x})$ adheres to both the governing equation and the prescribed conditions. Although PINNs were initially developed to solve PDEs, recent advances have extended their applicability to IDEs and FPIDEs \cite{Pang2019fPINNs, Lulu-DeepXDE2021, Yuan2022APINN}. The principal difficulty in these extended settings arises from the evaluation of integral and fractional operators, which cannot be directly computed via automatic differentiation. A commonly adopted approach to address this issue is the fractional PINN (fPINN) framework \cite{Pang2019fPINNs}, wherein integral and fractional derivatives are discretized through appropriate numerical schemes, effectively transforming the original equation into a PDE form amenable to standard PINN architectures. To circumvent the need for discretizing integral operators, the Auxiliary PINN (A-PINN) framework \cite{Yuan2022APINN} was proposed for IDEs, wherein auxiliary variables are introduced to represent integral terms explicitly. However, this strategy cannot be straightforwardly extended to fractional PDEs (fPDEs) or fIPDEs, as fractional derivatives possess inherent nonlocality and do not admit a closed-form differential reformulation in terms of auxiliary variables. In the following, we briefly review existing quantum PINN formulations for solving PDEs, which provide the conceptual basis for our proposed quantum-augmented frameworks.
\subsection{Quantum Physics-Informed Neural Network (Q-PINN)}
Quantum Physics-Informed Neural Network (Q-PINN) methodologies have recently emerged as promising alternatives to classical PINNs for solving PDEs. The central idea in Q-PINNs is to replace the classical neural network surrogate with a quantum neural network (QNN), while retaining the overall optimization and training paradigm of the standard PINN methodology. The use of quantum circuits to represent PDE trial solutions within a physics-informed framework was first introduced by Kyriienko, Paine, and Elfving \cite{Kyriienko2021}. In this formulation, the space–time coordinates are initially embedded into a quantum state through a unitary feature-map operator, which encodes the input variables into the amplitudes or phases of a set of qubits. The resulting state is then evolved by a parameterized quantum circuit (PQC) composed of variational layers and entangling gates, whose tunable parameters are optimized to minimize the physics-informed loss. Finally, the network output is obtained by evaluating the expectation value of an appropriately chosen Hermitian observable, which serves as the readout operator defining the scalar or vector-valued prediction of the Q-PINN model.
\begin{figure}[ht]
    \centering
    \includegraphics[width=0.9\linewidth]{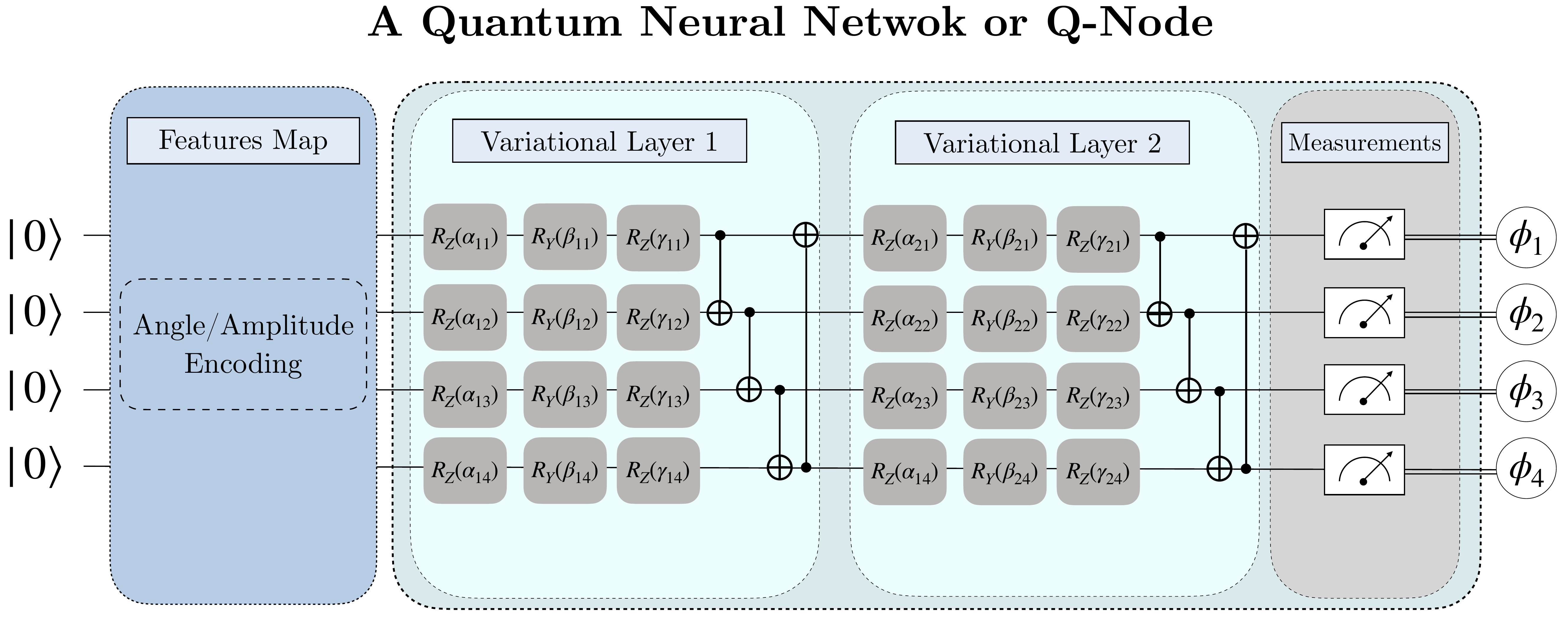}
    \caption{General QNN Architecture: Flow from feature map encoding, through two variational layers, to output measurements on a 4-qubit register, representative of general multi-layer QNN architectures employed in quantum machine learning models.}
    \label{fig:qnn_architecture}
\end{figure}
\subsubsection{Construction of QNN}\label{const-qnn}
The general architecture of a QNN node for the Q-PINN framework is illustrated in Figure~\ref{fig:qnn_architecture}. The QNN is composed of several key modules. The first step is the feature map, where input variables are mapped to a quantum state using a unitary feature map. For given input $\boldsymbol{x}=(x_1,x_2,\hdots,x_d)\in\mathbb{R}^{d}$ and the number of qubits $n$, let the trainable parameters be $\boldsymbol{a}=(\boldsymbol{a}_1,\boldsymbol{a}_2,\ldots,\boldsymbol{a}_n)\in (\mathbb{R}^d)^n$ and $\boldsymbol{b}=(b_1,\ldots,b_n)\in\mathbb{R}^n$. These are first mapped to an $n$-dimensional latent vector through an affine transformation
\begin{equation*}
    \boldsymbol{z} = \mathcal{W}\boldsymbol{x} + \boldsymbol{b},\qquad\text{ where }\quad 
    \mathcal{W} = \begin{pmatrix}
        \boldsymbol{a}_{1}^{T} \\
        \boldsymbol{a}_{2}^{T} \\
        \vdots \\
        \boldsymbol{a}_{n}^{T} \\
    \end{pmatrix}\in \mathbb{R}^{n\times d}, \ \boldsymbol{b}\in\mathbb{R}^{n},
\end{equation*}
so that the $i$-th component is $z_i =  \boldsymbol{a}_i^{T}\boldsymbol{x} + b_i,$ for $i = 1, 2, \dots, n.$
Then we use rotation gates to encode each component $z_i$ into the $i$-th qubit. In our implementation, we encode inputs using single-qubit rotations about the $y$-axis, which is given as
\begin{equation*}
    R_y(z_i)=\begin{pmatrix}
\cos(z_i/2) & -\sin(z_i/2)\\[4pt]
\sin(z_i/2) & \cos(z_i/2)
\end{pmatrix},\qquad i=1,\dots,n,
\end{equation*}
Therefore, the collective operation of encoding all components of $\boldsymbol{z}$ on the $n$ qubits is given by
\begin{equation}\label{eq-feature-map}
U_{\psi(\boldsymbol{x})}=\bigotimes_{i=1}^n R_y(z_i).
\end{equation}
Equivalently, one may apply a Hadamard before an $R_z$ encoding; both constructions create a nontrivial superposition and are mathematically equivalent up to parameter reparametrization. After feature encoding, the qubits evolve under $L$ layers of a parametrized quantum circuit. Each layer consists of arbitrary single-qubit rotation gates of the form
\begin{equation*}
R(\alpha_{\ell i},\beta_{\ell i},\gamma_{\ell i}) = R_z(\alpha_{\ell i})\,R_y(\beta_{\ell i})\,R_z(\gamma_{\ell i}),
\qquad \text{where }
R_z(\alpha)=
\begin{pmatrix}
e^{-i {\alpha}/{2}} & 0 \\
0 & e^{i {\alpha}/{2}}
\end{pmatrix},
\end{equation*}
followed by an entangling operation $E$ composed of a chain of CNOT gates arranged in a circular pattern:
\begin{equation*}
    E = \left(\prod_{k=1}^{n-1} \mathrm{CNOT}_{k,k+1}\right) \mathrm{CNOT}_{n,1}.
\end{equation*}
Thus, the $\ell$-th variational layer is written as
\begin{equation*}
    U(\Gamma^{\ell})= 
 E\left(\bigotimes_{i=1}^{n} 
R_z(\gamma_{\ell i})
R_y(\beta_{\ell i})
R_z(\alpha_{\ell i})
\right),
\end{equation*}
with parameters $\Gamma^\ell = \left\{(\alpha_{\ell i},\beta_{\ell i},\gamma_{\ell i}) : i=1,\dots,n\right\}$. For $L$ layers we define the parameterized unitary operator by composing the layers in order
\begin{equation*}
    U_{\Theta}(\Gamma) = \prod_{\ell=1}^{L} U(\Gamma^\ell),
    \qquad \text{where } \Theta = \left\{\Gamma^1, \Gamma^2, \dots, \Gamma^L\right\}.
\end{equation*}
Finally, the measurement layer of the network yields the output by computing the expectation value of a Hermitian operator. In this context, we select the Pauli-$Z$ operator as the observable for each qubit. For the $i$-th qubit, the output is
\begin{equation}\label{eq-ith-layerop}
    \phi_i{(\boldsymbol{x})}=\bra{0}^{\otimes n}\left(U_{\Theta}(\Gamma)U_{\psi(\boldsymbol{x})}\right)^{\dagger}Z_i  \left(U_{\Theta}(\Gamma)U_{\psi(\boldsymbol{x})}\right)\ket{0}^{\otimes n}, \qquad Z_i = I^{\otimes(i-1)}\otimes Z\otimes I^{\otimes (n-i)},
\end{equation}
which produces the $n$-dimensional vector
\begin{equation*}
    \Phi_{\Theta}(\boldsymbol{x}) = \left(\phi_1{(\boldsymbol{x})}, \phi_2{(\boldsymbol{x})}, \hdots, \phi_n{(\boldsymbol{x})}\right).
\end{equation*}
A classical linear layer maps these quantum expectations to the network prediction,
\begin{equation}\label{qnn-op}
    u_{\theta}{(\boldsymbol{x})} = \boldsymbol{c}^{T}\Phi_{\Theta}(\boldsymbol{x})+c_0,
\end{equation}
where $\boldsymbol{c}\in\mathbb{R}^n$ and $c_0\in\mathbb{R}$ are classical weights, and $\theta = \left\{\mathcal{W}, \boldsymbol{b},\Gamma, \boldsymbol{c}, c_0\right\}$ is the full set of parameters.

\subsubsection{QPINN Approach for Solving PDEs}
To understand the QPINN approach for solving differential equations, we consider the same generic governing equation~\eqref{gen-pde} used in the classical PINN formulation. In the QPINN framework, the classical neural network surrogate is replaced by the QNN constructed in Section~\ref{const-qnn}. The QNN output $u_{\theta}(\boldsymbol{x})$, computed from expectation values of the parameterized quantum circuit described in~\eqref{qnn-op}, serves as the predicted solution for the PDE. Using this quantum-enhanced trial solution, the PDE residual at an interior collocation point $\boldsymbol{x}_{r}^{i}$ is evaluated as
\begin{equation}
\mathcal{R}_{\theta}(\boldsymbol{x}_{r}^{i})
=
\mathscr{L}\!\left[u_{\theta}(\boldsymbol{x}_{r}^{i})\right]
-
f(\boldsymbol{x}_{r}^{i}).
\end{equation}
The total QPINN loss is defined analogously to the classical PINN loss:
\begin{equation}
\mathcal{L}_{\mathrm{Total}}(\theta)
=
\mathcal{L}_{\mathrm{PDE}}(\theta)
+
\mathcal{L}_{\mathrm{BC}}(\theta),
\end{equation}
where
\begin{equation}
\left\{
\begin{aligned}
\mathcal{L}_{\mathrm{PDE}}(\theta)
&=
\frac{1}{N_{r}}
\sum_{i=1}^{N_{r}}
\left|
\mathcal{R}_{\theta}(\boldsymbol{x}_{r}^{i})
\right|^{2}, \\[6pt]
\mathcal{L}_{\mathrm{BC}}(\theta)
&=
\frac{1}{N_{BC}}
\sum_{i=1}^{N_{BC}}
\left|
\mathcal{B}[u_{\theta}](\boldsymbol{x}_{b}^{i})
-
g(\boldsymbol{x}_{b}^{i})
\right|^{2}.
\end{aligned}
\right.
\end{equation}
Each evaluation of this loss requires executing the quantum circuit to obtain $\Phi_{\Theta}(\boldsymbol{x})$, differentiating the resulting QNN output with respect to the spatial variables, and forming the corresponding PDE and boundary residuals. The optimization of QPINN parameters follows a hybrid quantum--classical workflow, wherein a classical optimizer such as Adam or L-BFGS updates the complete parameter set $\theta = \{\mathcal{W}, \boldsymbol{b}, \Gamma, \boldsymbol{c}, c_{0}\}$, with $\Gamma$ denoting the variational quantum parameters. During each training iteration, the quantum circuit is evaluated at all collocation and boundary points to compute the trial solution; the PDE and boundary residuals are assembled to form the total loss; gradients with respect to quantum parameters are obtained using the parameter-shift rule, while gradients with respect to classical parameters are computed via standard automatic differentiation; and the parameters are updated to decrease the physics-informed loss.

\section{Approximation Capabilities of the QNN}\label{sec_approximation-error}
In this section we investigate the approximation capabilities of the proposed QNN architecture. We begin by characterizing the exact functional form of the QNN output, showing that each quantum expectation yields a real trigonometric polynomial in the affine encoded inputs. Building on this structure, we construct cosine ridge functions as basic primitives and subsequently establish quantitative $L^2$ universal approximation results, demonstrating the expressive power of the model.
\begin{proposition}\label{prop-1}
Let $n\in\mathbb{N}$ be the number of qubits, and let 
$\theta=\{\boldsymbol{a},\boldsymbol{b},\Gamma,\boldsymbol{c},c_0\}$ 
denote the full set of QNN parameters.
Then, for any input $\boldsymbol{x}\in\mathbb{R}^d$, the QNN output takes the form 
\begin{equation}
    u_{\theta}(\boldsymbol{x})
    =
    \sum_{i=1}^{n} 
    c_i\,F_i\!\big(\boldsymbol{a}_1^{T}\boldsymbol{x}+b_1,\dots,
                   \boldsymbol{a}_n^{T}\boldsymbol{x}+b_n;\Gamma\big)
    + c_0,
\end{equation}
where each $F_i(\cdot\,;\Gamma)$ is a real trigonometric polynomial depending only on the variational parameters $\Gamma$ and not on the input $\boldsymbol{x}$.
\end{proposition}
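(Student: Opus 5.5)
We will show that for fixed $\Gamma$ each expectation $\phi_i(\boldsymbol{x})$ in \eqref{eq-ith-layerop} is a real trigonometric polynomial in the encoded angles $z_j=\boldsymbol{a}_j^T\boldsymbol{x}+b_j$. The dependence on $\boldsymbol{x}$ enters only through these angles, so substituting them into \eqref{qnn-op} gives the claimed form.

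\textbf{Step 1 (encoded state).} By \eqref{eq-feature-map},
\begin{equation*}
U_{\psi(\boldsymbol{x})}\ket{0}^{\otimes n}=\bigotimes_{j=1}^n\big(\cos(z_j/2)\ket{0}+\sin(z_j/2)\ket{1}\big).
\end{equation*}
Hence each computational-basis amplitude of this state is a product $\prod_{j}h_j(z_j/2)$ with $h_j\in\{\cos,\sin\}$. Write $\ket{\psi(\boldsymbol{z})}=\sum_{s\in\{0,1\}^n}p_s(\boldsymbol{z})\ket{s}$, where the $p_s$ are real and of this product form.

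\textbf{Step 2 (variational circuit and observable).} The matrix $U_\Theta(\Gamma)$ is a fixed unitary depending only on $\Gamma$; it is built from the layer unitaries $R_z,R_y$ and CNOTs. Define the Hermitian matrix $M_i(\Gamma)=U_\Theta(\Gamma)^\dagger Z_iU_\Theta(\Gamma)$. Then
\begin{equation*}
\phi_i(\boldsymbol{x})=\sum_{s,s'}p_s(\boldsymbol{z})\,p_{s'}(\boldsymbol{z})\,[M_i(\Gamma)]_{s,s'}.
\end{equation*}
Since the $p_s$ are real and $M_i$ is Hermitian, the imaginary parts of $[M_i]_{s,s'}$ and $[M_i]_{s',s}$ cancel in the symmetric sum. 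So $\phi_i$ equals $\sum_{s,s'}p_sp_{s'}\,\mathrm{Re}[M_i]_{s,s'}$, which is real.

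\textbf{Step 3 (trigonometric structure).} Each product $p_s p_{s'}$ factors over qubits into terms of the form $\cos^2(z_j/2)$, $\sin^2(z_j/2)$ or $\cos(z_j/2)\sin(z_j/2)$. By the half-angle identities these equal $(1+\cos z_j)/2$, $(1-\cos z_j)/2$ and $\tfrac12\sin z_j$ respectively. Therefore
\begin{equation*}
\phi_i=F_i(z_1,\dots,z_n;\Gamma):=\sum_{\boldsymbol{k}\in\{0,1\}^n}\sum_{\boldsymbol{\epsilon}}\lambda_{\boldsymbol{k},\boldsymbol{\epsilon}}(\Gamma)\prod_{j:k_j=1}g_{\epsilon_j}(z_j),
\end{equation*}
where $g_{\epsilon}\in\{\cos,\sin\}$ and the coefficients $\lambda_{\boldsymbol{k},\boldsymbol{\epsilon}}$ are real and depend only on $\Gamma$. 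This is a real trigonometric polynomial of degree at most one in each variable. Composing with the affine map and applying \eqref{qnn-op} finishes the proof.

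The main point needing care is the realness argument in Step 2, which uses the Hermitian symmetry of $M_i(\Gamma)$ together with the real amplitudes. Beyond that, the argument is only bookkeeping with the half-angle identities.
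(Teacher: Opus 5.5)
Your proof is correct, but it runs in the Schr\"odinger picture, whereas the paper's proof works in the Heisenberg picture.

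Both arguments start from $M_i(\Gamma)=U_\Theta(\Gamma)^\dagger Z_iU_\Theta(\Gamma)$. From there the routes split.
\begin{itemize}
\item You write out the encoded product state and note that its computational-basis amplitudes $p_s(\boldsymbol z)$ are real products of $\cos(z_j/2)$'s and $\sin(z_j/2)$'s. This reduces $\phi_i$ to the bilinear form $\sum_{s,s'}p_sp_{s'}\,\mathrm{Re}[M_i(\Gamma)]_{s,s'}$. The trigonometric structure then comes from the half-angle identities, applied qubit by qubit.
\item The paper instead expands $M_i(\Gamma)$ in Pauli strings. It pushes the encoding onto each string via the single-qubit rule $R_y(z)^\dagger(\alpha_0I+\alpha_1X+\alpha_2Y+\alpha_3Z)R_y(z)=\alpha_0I+(\alpha_1\cos z-\alpha_3\sin z)X+\alpha_2Y+(\alpha_1\sin z+\alpha_3\cos z)Z$. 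It then uses that $\langle 0|^{\otimes n}\,\cdot\,|0\rangle^{\otimes n}$ annihilates every resulting string containing an $X$ or a $Y$.
\end{itemize}

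Your route is more elementary and self-contained, since it needs no Pauli basis and no conjugation formulas. It also exhibits the multilinear structure directly: degree at most one in each $z_j$, so all frequencies lie in $\{-1,0,1\}^n$. The realness step you single out as delicate is actually automatic. Since $\phi_i$ is the expectation of a Hermitian operator, taking real parts of the coefficients suffices.

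The paper's route is more modular. All $\Gamma$-dependence sits in the Pauli coefficients of $M_i(\Gamma)$, and these become the trigonometric coefficients essentially verbatim: an $I$, $X$ or $Z$ factor on qubit $s$ contributes $1$, $\sin z_s$ or $\cos z_s$, and any $Y$ factor kills the term. Changing the encoding gate only requires replacing the single-qubit conjugation rule.
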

\begin{proof} Recall from the section~\ref{const-qnn}  the QNN forward unitary is
\[
U(\boldsymbol{x};\Theta) \;=\; U_{\Theta}(\Gamma)\,U_{\psi(\boldsymbol{x})},
\]
where the $U_{\psi(\boldsymbol{x})}$ is the feature map as defined in equation~\eqref{eq-feature-map} and $U_{\Theta}(\Gamma)$ is the variational unitary depending only on
the variational parameters $\Gamma$. 
For the $i$-th qubit, the output is the expectation value with respect to the measurement operator the Pauli-$Z_i$ is written as 
\begin{equation}
    \phi_i(\boldsymbol{x})
= \langle 0|^{\otimes n}\,U(\boldsymbol{x};\Theta)^\dagger\,Z_i\,
U(\boldsymbol{x};\Theta)\,|0\rangle^{\otimes n}.
\end{equation}
Define the operator
\[
M_i(\Gamma) \;:=\; U_{\Theta}(\Gamma)^\dagger\,Z_i\,U_{\Theta}(\Gamma).
\]
By construction $M_i(\Gamma)$ depends only on $\Gamma$ not on $\boldsymbol{x}$.
Therefore, we can write 
\begin{equation}\label{op-qubiti}
    \phi_i(\boldsymbol{x})
= \langle 0|^{\otimes n}\,W(\boldsymbol{z})^\dagger
\;M_i(\Gamma)\; W(\boldsymbol{z})\,|0\rangle^{\otimes n},
\qquad
W(\boldsymbol{z}):=\bigotimes_{j=1}^n R_y(z_j).
\end{equation}
The Pauli operators form a basis for the space of $2^n\times 2^n$
Hermitian matrices. Hence we can expand $M_i(\Gamma)$ as a linear
combination of Pauli strings:
\begin{equation}\label{eq:pauli_expansion}
\begin{aligned}
M_i(\Gamma)
&= \sum_{j} c_{i,j}(\Gamma)\, \mathcal{P}_j, \\
\mathscr{P}_n
&:= \left\{\, \mathcal{P}_j = \bigotimes_{s=1}^n P_s :
P_s \in \{\alpha_{s,0}I+\alpha_{s,1}X+\alpha_{s,2}Y+\alpha_{s,3}Z :
\alpha_{s,i}\in\mathbb{R}\}\,\right\}.
\end{aligned}
\end{equation}
where the coefficients $c_{i,j}(\Gamma)$ are function depenting on the trainable parameters $\Gamma$. We now conjugate each Pauli string $\mathcal{P}_j \in \mathscr{P}_n $ by the encoding unitary $W(\boldsymbol{z})$.

\begin{align*}
W(\boldsymbol{z})^\dagger \mathcal{P}_j W(\boldsymbol{z})
&= \bigotimes_{s=1}^n R_y(z_s)^\dagger P_s R_y(z_s) \\
&= \bigotimes_{s=1}^n R_y(z_s)^\dagger \left(\alpha_{s,0}I+\alpha_{s,1}X+\alpha_{s,2}Y+\alpha_{s,3}Z\right) R_y(z_s),\quad\alpha_{s,i}\in \mathbb{R} \\
&= \bigotimes_{s=1}^n \Big(\alpha_{s,0}I+(\alpha_{s,1}\cos z_s-\alpha_{s,3}\sin z_s)X \\
&\qquad\qquad\qquad+\alpha_{s,2}Y+ (\alpha_{s,1}\sin z_s+\alpha_{s,3}\cos z_s) Z\Big)\\
&= \sum_{P_s \in \{I,X,Y,Z\}} \left( \prod_{s=1}^n \lambda_{s,P_s}(z_s) \right)\bigotimes_{s=1}^n P_s = \sum_{\mathcal{P}_\alpha \in \mathscr{P}_n}\Lambda_{j,\alpha}(\boldsymbol{z})\mathcal{P}_\alpha,
\end{align*}
where each $\lambda_{s,P_s}(z_s)$ is either a constant or a linear combination of 
$\cos{z_{s}}$ or $\sin{z_{s}}$ and $\Lambda_{j,\alpha}(\boldsymbol{z})$  is the product of $\lambda_{s,P_s}(z_s)$'s. Therefore $\Lambda_{j,\alpha}(\boldsymbol{z})$ is a real trigonometric polynomial in the variable $\boldsymbol{z}$.
Substituting this expansion into \eqref{op-qubiti} gives
\begin{align*}
\phi_i(\boldsymbol{x})
&= \langle 0|^{\otimes n}W(\boldsymbol{z})^\dagger
M_i(\Gamma)W(\boldsymbol{z})|0\rangle^{\otimes n}=\langle 0|^{\otimes n}W(\boldsymbol{z})^\dagger
\left(\sum_{j} c_{i,j}(\Gamma)\mathcal{P}_j\right)W(\boldsymbol{z})\,|0\rangle^{\otimes n}\\
=&\langle 0|^{\otimes n}\sum_j c_{i,j}(\Gamma)\,
W(\boldsymbol{z})^\dagger \mathcal{P}_j W(\boldsymbol{z})
|0\rangle^{\otimes n}
= \langle 0|^{\otimes n}\sum_j c_{i,j}(\Gamma)
\sum_{\mathcal{P}_\alpha\in\mathscr{P}_n}
\Lambda_{j,\alpha}(\boldsymbol{z}) \mathcal{P}_\alpha
|0\rangle^{\otimes n}\\
=&\sum_{j} c_{i,j}(\Gamma) \sum_{\mathcal{P}_\alpha \in \mathscr{P}_n} \Lambda_{j,\alpha}(\boldsymbol{z}) \langle 0|^{\otimes n} \mathcal{P}_\alpha |0\rangle^{\otimes n}.
\end{align*}
The expectation $\langle 0|^{\otimes n} \mathcal{P}_\alpha |0\rangle^{\otimes n}$ vanishes unless 
$\mathcal{P}_\alpha\in \mathscr{P}_n^{I,Z}\subset \mathscr{P}_n$ the set of Pauli strings composed exclusively of $I$ and $Z$. For such strings, each qubit contributes a factor $\langle 0| I |0\rangle = 1$ or $\langle 0| Z |0\rangle = 1$; 
hence the total expectation equals $1$. Then
\begin{equation*}
    \phi_i(\boldsymbol{x})
= \sum_{j} c_{i,j}(\Gamma) 
\sum_{\mathcal{P}_\alpha \in \mathscr{P}_n^{I,Z}} \Lambda_{j,\alpha}(\boldsymbol{z}).
\end{equation*}
Every $\Lambda_{j,\alpha}(\boldsymbol{z})$ is, by construction, a product of factors 
$\lambda_{s,P_s}(z_s)$ with $P_s \in \{I,Z\}$. 
Since each $\lambda_{s,I}(z_s)=\alpha_{s,0}$ is constant and each 
$\lambda_{s,Z}(z_s)$ is a linear trigonometric monomial, their product is a real trigonometric polynomial in the variables $z_1,\dots,z_n$. Consequently, $\phi_i(\boldsymbol{x})$ itself is a trigonometric polynomial 
\begin{equation*}
    \phi_i(\boldsymbol{x}) = F_i\big(z_1,\dots,z_n;\Gamma\big),
\end{equation*}
where the coefficients of $F_i$ depend only on the variational parameters $\Gamma$. Therefore, $\phi_i(\boldsymbol{x})$ can be written as
\begin{equation*}
    \phi_i(\boldsymbol{x}) = F_i\!\big(\boldsymbol{a}_1^{T}\boldsymbol{x}+b_1,\dots,
                   \boldsymbol{a}_n^{T}\boldsymbol{x}+b_n;\Gamma\big).
\end{equation*}
Finally, the QNN output $u_{\theta}(\boldsymbol{x})$ is a weighted sum of these expectation values:
\begin{equation*}
    u_{\theta}(\boldsymbol{x}) 
= \sum_{i=1}^{n} c_i\,\phi_i(\boldsymbol{x}) + c_0
= \sum_{i=1}^{n} c_i\,F_i\!\big(\boldsymbol{a}_1^{T}\boldsymbol{x}+b_1,\dots,
                   \boldsymbol{a}_n^{T}\boldsymbol{x}+b_n;\Gamma\big) + c_0,
\end{equation*}
which is exactly the form claimed in the proposition.
\end{proof}
From the product–trigonometric representation of each $F_i$ obtained in the proof of Proposition~\ref{prop-1}, we may further rewrite every monomial in the variables $\{\cos z_j,\sin z_j\}$ using standard product-to-sum identities.  This yields an equivalent finite Fourier-series representation in terms of cosine functions of integer linear combinations of the angles.  The resulting form is stated in the following corollary.

\begin{corollary}\label{coroll-1}
Under the same assumptions as Proposition~\ref{prop-1}, let $z_j=\boldsymbol{a}_j^{T}\boldsymbol{x}+b_j$.
Then for each $i=1,\dots,n$ there exists a finite index set 
$\mathcal{K}_i\subset\mathbb{Z}^n$ and real coefficients
$A_{\boldsymbol{k},i}(\Gamma)$ and phases $\delta_{\boldsymbol{k},i}(\Gamma)$,
depending only on $\Gamma$, such that
\[
F_i(z_1,\dots,z_n;\Gamma)
= \sum_{\boldsymbol{k}\in\mathcal{K}_i} 
    A_{\boldsymbol{k},i}(\Gamma)\,\cos\!\big(\boldsymbol{k}\cdot\boldsymbol{z}+\delta_{\boldsymbol{k},i}(\Gamma)\big).
\]
Equivalently, each $F_i$ is a real polynomial in the one-variable trigonometric
functions $\{\cos z_j,\sin z_j\}_{j=1}^n$ whose coefficients depend only on $\Gamma$.
\end{corollary}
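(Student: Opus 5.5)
The plan is to start from the representation established in the proof of Proposition~\ref{prop-1}:
\[
F_i(\boldsymbol{z};\Gamma)=\sum_j c_{i,j}(\Gamma)\sum_{\mathcal{P}_\alpha\in\mathscr{P}_n^{I,Z}}\prod_{s=1}^n\lambda_{s,P_s}(z_s).
\]
Here each factor $\lambda_{s,P_s}(z_s)$ is either a constant or of the form $p_s\cos z_s+q_s\sin z_s$. The real coefficients $p_s,q_s$ are built from the $\alpha_{s,i}$, and these are absorbed into $c_{i,j}(\Gamma)$. This immediately gives the ``equivalent'' statement: $F_i$ is a real polynomial in $\{\cos z_j,\sin z_j\}_{j=1}^n$ with coefficients depending only on $\Gamma$. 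Each variable $z_s$ appears with degree at most one in each monomial, since each qubit contributes a single factor.

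Next, I will rewrite each product $\prod_{s\in S}(p_s\cos z_s+q_s\sin z_s)$ as a sum of cosines. I will use the harmonic-addition identity $p\cos z+q\sin z=r\cos(z-\varphi)$, with $r=\sqrt{p^2+q^2}$ and $\varphi=\operatorname{atan2}(q,p)$. This turns each monomial into $C\prod_{s\in S}\cos(z_s-\varphi_s)$. I will then apply the product-to-sum identity $\cos A\cos B=\tfrac12[\cos(A+B)+\cos(A-B)]$ inductively on $|S|$. The result is
\[
\prod_{s\in S}\cos(z_s-\varphi_s)=2^{-|S|+1}\sum_{\epsilon}\cos\Big(\sum_{s\in S}\epsilon_s(z_s-\varphi_s)\Big),
\]
where the sum runs over sign vectors $\epsilon\in\{\pm1\}^S$ with one sign fixed. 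Each term has the form $\cos(\boldsymbol{k}\cdot\boldsymbol{z}+\delta)$ with $\boldsymbol{k}\in\{-1,0,1\}^n\subset\mathbb{Z}^n$. The constant monomials, coming from $S=\emptyset$, correspond to $\boldsymbol{k}=\boldsymbol{0}$ with $\delta=0$.

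Finally, I will collect terms with the same frequency vector $\boldsymbol{k}$. This uses the fact that a finite sum $\sum_m A_m\cos(\theta+\delta_m)$ can be written as a single term $A\cos(\theta+\delta)$: expand, sum the $\cos\theta$ and $\sin\theta$ coefficients, and apply harmonic addition again. The same step also lets me identify $\boldsymbol{k}$ with $-\boldsymbol{k}$ if a canonical index set is wanted. Taking $\mathcal{K}_i$ to be the finite set of frequencies that occur gives the claimed expansion. The amplitudes $A_{\boldsymbol{k},i}$ and phases $\delta_{\boldsymbol{k},i}$ are explicit functions of the coefficients $c_{i,j}(\Gamma)$, so they depend only on $\Gamma$. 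Substituting $z_j=\boldsymbol{a}_j^T\boldsymbol{x}+b_j$ completes the argument.

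The only delicate point is bookkeeping rather than analysis. I must check that the phases and amplitudes remain well defined when some $r=0$; in that case the term is simply dropped. I must also make sure that the inductive product-to-sum step keeps the frequency vectors integer, which is immediate because each step only adds or subtracts unit coefficients.
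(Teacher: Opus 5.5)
Your proof is correct. Its core tool is the same as the paper's: product-to-sum identities, then collecting terms. The difference is in what you start from.

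\textbf{The paper's route.} It uses only the \emph{statement} of Proposition~\ref{prop-1}, namely that $F_i$ is some real polynomial in $\{\cos z_j,\sin z_j\}$. So it must handle arbitrary monomials $\prod_j(\cos z_j)^{\ell_j}(\sin z_j)^{m_j}$. It first power-reduces each one-variable factor to $\sum_{t=-T_j}^{T_j}\alpha_{j,t}\cos(tz_j+\delta_{j,t})$. Only then does it multiply across coordinates and apply $\cos A\cos B=\tfrac12[\cos(A+B)+\cos(A-B)]$.

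\textbf{Your route.} You reach into the \emph{proof} of Proposition~\ref{prop-1}. There each qubit is encoded by a single $R_y(z_s)$, so each surviving factor is $\lambda_{s,I}=\alpha_{s,0}$ or $\lambda_{s,Z}(z_s)=\alpha_{s,1}\sin z_s+\alpha_{s,3}\cos z_s$. Every monomial is therefore multilinear: a constant times $\prod_{s\in S}(p_s\cos z_s+q_s\sin z_s)$. This lets you drop the power-reduction step and use harmonic addition directly. It also gives a sharper conclusion, $\mathcal{K}_i\subseteq\{-1,0,1\}^n$, so there are at most $3^n$ frequency vectors. The paper's proof never records this bound.

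\textbf{Trade-off.} The paper's argument is more robust. It would go through unchanged if the same $z_j$ were encoded several times (data re-uploading), where higher harmonics $|k_j|>1$ genuinely appear. Yours relies on the specific one-shot encoding, but in exchange it pins down the spectrum exactly. Your explicit merging of equal-$\boldsymbol{k}$ terms with different phases into a single $A\cos(\boldsymbol{k}\cdot\boldsymbol{z}+\delta)$ is what the statement's indexing by $\boldsymbol{k}$ requires. It spells out what the paper leaves as ``collecting terms.''
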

\begin{proof}
By Proposition~\ref{prop-1}, each $F_i$ is a finite real polynomial in the variables
$\{\cos z_j,\sin z_j\}$; explicitly,
\begin{equation}
F_i(z)
= \sum_{\ell,m} c_{\ell,m}(\Gamma)\prod_{j=1}^n (\cos z_j)^{\ell_j}(\sin z_j)^{m_j},
\end{equation}
where $\ell=(\ell_1,\dots,\ell_n), m=(m_1,\dots,m_n)$ range over a finite set of multi-indices and the coefficients $c_{\ell,m}(\Gamma)\in\mathbb{R}$ depend only on $\Gamma$. Fix a single monomial
\begin{equation}
    M_{\ell,m}(z)=\prod_{j=1}^n(\cos z_j)^{\ell_j}(\sin z_j)^{m_j}.
\end{equation}
For each coordinate $j$ one may expand $(\cos z_j)^{\ell_j}(\sin z_j)^{m_j}$
into a finite sum of one-variable cosines with integer multiples of $z_j$ as arguments
(e.g.\ by repeatedly applying product-to-sum identities or by using the complex exponential representation).
Thus there exist $T_j\in\mathbb{N}$ and real constants $\alpha_{j,t}$, $\delta_{j,t}$
such that
\[
(\cos z_j)^{\ell_j}(\sin z_j)^{m_j}
= \sum_{t=-T_j}^{T_j}\alpha_{j,t}\cos(t z_j+\delta_{j,t}).
\]
Multiplying these finite sums over $j=1,\dots,n$ and expanding yields a finite sum
of products of one-variable cosines $\prod_{j=1}^n\cos(t_j z_j+\delta_{j,t_j})$.
Each such product can in turn be converted, by repeated use of
$\cos A\cos B=\tfrac12(\cos(A+B)+\cos(A-B))$, into a finite linear combination of
cosines of integer linear forms $\boldsymbol{k}\cdot\boldsymbol{z}$ with phases.
Collecting terms across the finitely many monomials $(\ell,m)$ produces the stated finite cosine expansion,
with $\mathcal{K}_i$ finite and all coefficients/phases depending only on $\Gamma$.
\end{proof}
Next, we show that the QNN architecture can realize elementary cosine ridge functions, which serve as the basic building blocks of classical Fourier feature approximations. The corollary shows that each component $F_i$ may be represented as a finite Fourier series; in the sequel, we will use the following constructive lemma that exhibits how a single cosine ridge function is realized by one qubit.
\begin{lemma}\label{lemma-1}
Let $\boldsymbol{\omega}\in\mathbb{R}^d$ and $\tau\in\mathbb{R}$. Then there exist QNN parameters $\theta=\{\boldsymbol{a},\boldsymbol{b},\Gamma,\boldsymbol{c},c_0\}$ such that for some qubit index $i$ the corresponding quantum expectation
satisfies
\begin{equation}
    \phi_i(\boldsymbol{x})= K\,\cos(\boldsymbol{\omega}^\top\boldsymbol{x}+\tau)
\qquad\text{for all }\boldsymbol{x}\in\mathbb{R}^d,
\end{equation}
for any amplitude $K$ with $0\le K\le 1$. Moreover, any amplitude $K'>0$ can be realized in the full QNN output by taking a unit-amplitude quantum expectation and setting the classical readout weight $c_i=K'$.
\end{lemma}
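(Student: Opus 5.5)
The plan is to exploit the elementary one-qubit identity $\langle 0|R_y(z)^\dagger Z R_y(z)|0\rangle=\cos z$. We choose the affine encoding so that a single qubit receives the angle $\boldsymbol{\omega}^\top\boldsymbol{x}+\tau$, up to a phase correction, and every other qubit receives the constant angle $0$. The variational circuit is then used only to route the measured observable onto that qubit and to shrink the amplitude.

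The complication is that the entangler $E$ is fixed and appears in every layer, so $U_\Theta(\Gamma)$ can never be made the identity. The first step handles this. Set all rotation angles in layers $\ell\ge 2$ to zero, so these layers reduce to $E$. A CNOT conjugates a tensor product of $I$'s and $Z$'s into another such product with no sign, because $\mathrm{CNOT}\,(Z\otimes I)\,\mathrm{CNOT}=Z\otimes I$ and $\mathrm{CNOT}\,(I\otimes Z)\,\mathrm{CNOT}=Z\otimes Z$. Hence
\[
(E^{L-1})^\dagger Z_i\, E^{L-1}=Z_{S}:=\textstyle\bigotimes_{j\in S}Z_j
\]
for some index set $S$, which is nonempty because conjugation by a unitary cannot map $Z_i$ to the identity. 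Applying one further $E$ from layer $1$ gives another such string $Z_{S'}$ with $S'\neq\emptyset$. We fix the measured index $i$ (any $i$ works) and pick one index $j\in S'$.

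The second step chooses the layer-$1$ rotations and the encoding. On every qubit $s\neq j$ we set all layer-$1$ rotation angles to zero and take $\boldsymbol{a}_s=0$, $b_s=0$, so $R_y(0)=I$. Each such qubit stays in $|0\rangle$, and any factor $Z_s$ of $Z_{S'}$ with $s\ne j$ contributes $\langle 0|Z|0\rangle=1$. The expectation therefore factorizes to a single-qubit quantity on qubit $j$:
\[
\langle 0|R_y(z_j)^\dagger R^\dagger Z R\, R_y(z_j)|0\rangle ,
\]
where $R$ is the layer-$1$ rotation on qubit $j$. In Bloch-vector language, $R_y(z_j)|0\rangle$ has Bloch vector $(\sin z_j,0,\cos z_j)$. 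Applying $R_z(\alpha)$ preserves the $z$-component and rotates the $x$-component into $(\sin z_j\cos\alpha,\sin z_j\sin\alpha)$. Applying $R_y(\beta)$ then produces the $z$-component
\[
\cos\beta\cos z_j-\sin\beta\cos\alpha\,\sin z_j = A\cos(z_j+\delta),\qquad A=\sqrt{\cos^2\beta+\sin^2\beta\cos^2\alpha}.
\]
The exact sign conventions depend on the order $R_z R_y R_z$ used in the layer, and we would fix them carefully. Taking $\beta=\pi/2$ gives $A=|\cos\alpha|$, and choosing $\alpha=\arccos K$ realizes any $K\in[0,1]$; the phase $\delta$ is then a known function of $\alpha$ and $\beta$. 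Finally we set $\boldsymbol{a}_j=\boldsymbol{\omega}$ and $b_j=\tau-\delta$, which yields $\phi_i(\boldsymbol{x})=K\cos(\boldsymbol{\omega}^\top\boldsymbol{x}+\tau)$ for all $\boldsymbol{x}\in\mathbb{R}^d$.

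For the last claim, we take $K=1$ (for example $\alpha=0$), and set $c_i=K'$, $c_{s}=0$ for $s\ne i$, and $c_0=0$ in \eqref{qnn-op}. This gives $u_\theta(\boldsymbol{x})=K'\cos(\boldsymbol{\omega}^\top\boldsymbol{x}+\tau)$.

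I expect the main obstacle to be the first step: rigorously justifying that the fixed circular CNOT layers, in the Heisenberg picture, send $Z_i$ to a sign-free $Z$-string with nonempty support. I would verify this from the two conjugation identities above, applied gate by gate, together with the remark that $I$- and $Z$-strings are closed under CNOT conjugation. The single-qubit Bloch computation and the resulting phase bookkeeping are routine.
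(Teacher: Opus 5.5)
Your argument is correct, but it takes a different and more careful route than the paper.

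\textbf{The paper's route.} The paper asserts that the variational parameters can be chosen so that the circuit factors as $U_i(\boldsymbol{x})\otimes I_{\mathrm{rest}}$. It encodes $z_i=\boldsymbol{\omega}^\top\boldsymbol{x}$ (with $b_i=0$) on the measured qubit itself and writes $U_i^\dagger Z U_i=r_xX+r_yY+r_zZ$ with $r$ a unit vector, which gives $\phi_i=r_z\cos z_i+r_x\sin z_i$. It then sets $r_z=K\cos\tau$, $r_x=-K\sin\tau$. So $r_y=\pm\sqrt{1-K^2}$ absorbs the amplitude deficit, and the phase is carried by $r$ rather than by the bias. That factorization is never justified. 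For the architecture of Section~\ref{const-qnn} it is not generally available, because the circular CNOT chain $E$ occurs in every layer and cannot be switched off. With all rotations zero one gets $E^L$, which is the identity only when $L$ is a multiple of the order of $E$.

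\textbf{What your route does differently.} Your Heisenberg-picture step fills exactly this gap. CNOT conjugation maps sign-free $Z$-strings to sign-free $Z$-strings, so $Z_i$ becomes some $Z_{S'}$ with $S'\neq\emptyset$. Keeping every qubit other than the encoding qubit $j\in S'$ in $|0\rangle$ then makes the extra factors contribute $1$. The cost is twofold:
\begin{itemize}
\item the input sits on a qubit $j$ that may differ from the measured qubit $i$, which the statement allows;
\item amplitude and phase are set separately, by $\beta=\pi/2$, $\alpha=\arccos K$, and $b_j=\tau-\pi/2$.
\end{itemize}
Your formula $\cos\beta\cos z_j-\sin\beta\cos\alpha\sin z_j$ is right for the paper's convention, in which the first-applied $R_z$ precedes $R_y$ and the last $R_z$ commutes with $Z$.

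\textbf{Two small remarks.} First, your computation presupposes that the layer carrying the nonzero rotations is the one applied first to $W(\boldsymbol{z})|0\rangle^{\otimes n}$. The order in $\prod_{\ell=1}^{L}U(\Gamma^\ell)$ is not pinned down, so you should state explicitly that you place the rotations in that layer. Second, your construction spends the whole register on a single cosine. That is all the lemma claims, but it means your version does not by itself support the ``one qubit per cosine term'' step in the proof of Theorem~\ref{thm:l2_uat_concise}. That step implicitly relies on the unproved product structure that the paper's proof asserts.
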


\begin{proof}
Choose parameters so that only qubit $i$ depends on the input: set $\boldsymbol{a}_j=\mathbf{0}, b_j=0$ for all $j\neq i$, and $\boldsymbol{a}_i=\boldsymbol{\omega}, b_i=0$, so $z_i=\boldsymbol{\omega}^\top\boldsymbol{x}$ and $z_j=0$ for $j\neq i$. Arrange the variational parameters so that the whole circuit factors as
$U(\boldsymbol{x};\Theta)=U_i(\boldsymbol{x})\otimes I_{\mathrm{rest}}$; hence only the $i$-th qubit contributes a nontrivial expectation.

Let the single-qubit variational block on qubit $i$ be the Euler decomposition
$U_i=R_z(\alpha)R_y(\beta)R_z(\gamma)$, and define $M:=U_i^\dagger Z U_i$.
As a traceless Hermitian $2\times2$ matrix there exists a real unit vector $r=(r_x,r_y,r_z)$ with
$M=r_x X + r_y Y + r_z Z$. For the encoded state $|\psi(z_i)\rangle=R_y(z_i)|0\rangle$ a direct calculation yields
\begin{equation}
\phi_i(\boldsymbol{x})=\langle\psi(z_i)|M|\psi(z_i)\rangle
= r_z\cos z_i + r_x\sin z_i,
\qquad z_i=\boldsymbol{\omega}^\top\boldsymbol{x}.
\end{equation}
Therefore, the quantum expectation is a sinusoidal combination with an amplitude less than 1. Choosing $r_z = K\cos\tau,\; r_x = -K\sin\tau$ is possible exactly when $0\le K\le 1$, which gives
\begin{equation}
    \phi_i(\boldsymbol{x}) = K\cos(z_i+\tau)=K\cos(\boldsymbol{\omega}^\top\boldsymbol{x}+\tau).
\end{equation}
Finally, if an amplitude $K'>1$ is desired for the network output, realize a unit-amplitude cosine on the quantum side (choose $K=1$) and set the classical readout weight $c_i=K'$, so that the full QNN realizes $c_i\phi_i(\boldsymbol{x})=K'\cos(\cdot)$.
\end{proof}
\begin{theorem}\label{thm:l2_uat_concise}
Let $\mu$ be a probability measure on $\mathbb{R}^d$.
Suppose $f \in C(\mathbb{R}^d)\cap L^1(\mathbb{R}^d)$ and its Fourier transform 
$\widehat{f}\in L^1(\mathbb{R}^d)$.
For every integer $n\ge 1$ there exist trainable parameters $\theta=\{\boldsymbol{a},\boldsymbol{b},\Gamma,\boldsymbol{c},c_0\},$ with $n$ qubits, such that the QNN output $u_\theta(\boldsymbol{x})$ satisfies
\begin{equation*}
    \|f - u_{\theta}\|_{L^2(\mu)}
    \le
    \frac{\|\widehat{f}\|_{L^1}}{\sqrt{n}}.
\end{equation*}
\end{theorem}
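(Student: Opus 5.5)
This is a Maurey-type probabilistic argument (as in Barron's theorem), with the sampled cosine ridge functions realized qubit by qubit via Lemma~\ref{lemma-1}.

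\textbf{Step 1: write $f$ as an expectation of cosine ridge functions.} Since $\widehat f\in L^1$ and $f$ is continuous, Fourier inversion holds pointwise:
\[
f(\boldsymbol{x})=\frac{1}{(2\pi)^d}\int_{\mathbb{R}^d}\widehat f(\boldsymbol{\xi})e^{i\boldsymbol{\xi}\cdot\boldsymbol{x}}\,d\boldsymbol{\xi}.
\]
Write $\widehat f(\boldsymbol{\xi})=|\widehat f(\boldsymbol{\xi})|e^{i\tau(\boldsymbol{\xi})}$. Because $f$ is real, we may take the real part to get
\[
f(\boldsymbol{x})=\frac{1}{(2\pi)^d}\int|\widehat f(\boldsymbol{\xi})|\cos(\boldsymbol{\xi}\cdot\boldsymbol{x}+\tau(\boldsymbol{\xi}))\,d\boldsymbol{\xi}=C\,\mathbb{E}_{\boldsymbol{\xi}\sim\nu}\big[\cos(\boldsymbol{\xi}\cdot\boldsymbol{x}+\tau(\boldsymbol{\xi}))\big].
\]
Here $\nu$ is the probability measure with density $|\widehat f|/\|\widehat f\|_{L^1}$, and $C=\|\widehat f\|_{L^1}/(2\pi)^d$. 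Note that $C\le\|\widehat f\|_{L^1}$, so the $(2\pi)^{-d}$ normalization only helps the final constant. (The case $\widehat f\equiv 0$ is trivial.)

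\textbf{Step 2: sample and bound the variance.} Draw $\boldsymbol{\xi}_1,\dots,\boldsymbol{\xi}_n$ i.i.d.\ from $\nu$ and set
\[
g_n(\boldsymbol{x})=\frac{C}{n}\sum_{i=1}^n\cos(\boldsymbol{\xi}_i\cdot\boldsymbol{x}+\tau(\boldsymbol{\xi}_i)).
\]
For each fixed $\boldsymbol{x}$, $g_n(\boldsymbol{x})$ is unbiased for $f(\boldsymbol{x})$. Its variance is at most $C^2\,\mathbb{E}[\cos^2(\cdot)]/n\le C^2/n$. Integrating against $\mu$ and using Fubini/Tonelli (the integrand is bounded and jointly measurable) gives
\[
\mathbb{E}\,\|f-g_n\|_{L^2(\mu)}^2\le \frac{C^2}{n}.
\]
Hence some realization of the samples satisfies $\|f-g_n\|_{L^2(\mu)}\le C/\sqrt n\le\|\widehat f\|_{L^1}/\sqrt n$.

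\textbf{Step 3: realize $g_n$ exactly with the $n$-qubit QNN.} For each qubit $i$, set $\boldsymbol{a}_i=\boldsymbol{\xi}_i$ and choose $b_i$ or the single-qubit rotation so that $\phi_i(\boldsymbol{x})=\cos(\boldsymbol{\xi}_i\cdot\boldsymbol{x}+\tau_i)$; this is Lemma~\ref{lemma-1} with $K=1$. Then take $c_i=C/n$ and $c_0=0$, so that $u_\theta=g_n$.

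\textbf{Main obstacle.} The key difficulty is that Lemma~\ref{lemma-1} handles one qubit with the rest factored out, whereas here all $n$ qubits must simultaneously carry independent ridge functions. The fixed CNOT entangling layer $E$ in each $U(\Gamma^\ell)$ prevents a naive tensor factorization. I would resolve this with the same device the lemma implicitly uses. Put the phase into the encoding, $b_i=\tau_i+\pi/2$, so that $R_y(z_i)|0\rangle$ yields $\langle Z\rangle=\cos z_i$ before any variational layer. Then choose rotation parameters that make the variational circuit act trivially on the $Z$-observables, for instance taking $L$ layers whose net effect is the identity: with all rotations set to zero, $E^L=I$ for a suitable $L$, since $E$ is a permutation matrix of finite order. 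Alternatively, one can simply argue, as in the lemma, that the parameters may be arranged so that $M_i(\Gamma)=Z_i$ for every $i$. In either case $\phi_i(\boldsymbol{x})=\cos(\boldsymbol{\xi}_i\cdot\boldsymbol{x}+\tau_i)$ for all $i$ at once, which completes the construction.
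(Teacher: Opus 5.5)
Your proof is correct and follows the paper's argument: Fourier inversion, Monte Carlo sampling from the density $|\widehat f|/\|\widehat f\|_{L^1}$, the pointwise variance bound integrated against $\mu$, and one qubit per sampled cosine with equal readout weights. It is more careful than the paper on two points: you keep the $(2\pi)^{-d}$ inversion factor that the paper drops (its unbiasedness claim is off by $(2\pi)^d$, though the bound survives), and you justify realizing all $n$ ridge functions at once by zeroing the rotations and taking $L$ equal to the order of the permutation $E$, so that $U_{\Theta}(\Gamma)=I$, whereas the paper only invokes Lemma~\ref{lemma-1} qubit by qubit. One slip: with $M_i(\Gamma)=Z_i$ you get $\phi_i(\boldsymbol{x})=\cos(\boldsymbol{a}_i^{\top}\boldsymbol{x}+b_i)$, so you need $b_i=\tau_i$, not $\tau_i+\pi/2$, which would give $-\sin(\boldsymbol{\xi}_i\cdot\boldsymbol{x}+\tau_i)$.
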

\begin{proof}
Since $f\in L^{1}(\mathbb{R}^{d})$ and $\widehat{f}\in L^{1}(\mathbb{R}^{d})$, the classical Fourier inversion theorem guarantees that $f$ admits the pointwise representation
\begin{equation*}
    f(x)
    = \Re \int_{\mathbb{R}^{d}} \widehat{f}(\xi)\,e^{i\xi\cdot x}\,d\xi,
    \qquad x\in\mathbb{R}^{d},
\end{equation*}
and this representation defines a continuous function. Writing $\widehat{f}(\xi)=A(\xi)e^{i\phi(\xi)}$ with $A(\xi)=|\widehat{f}(\xi)|\ge 0$ and $\phi(\xi)\in\mathbb{R}$, the preceding expression becomes
\begin{equation*}
    f(x)
    =
    \int_{\mathbb{R}^{d}}
    A(\xi)\,\cos\!\big(\xi\cdot x + \phi(\xi)\big)\,d\xi.
\end{equation*}
Define
\begin{equation*}
    \Lambda := \|\widehat{f}\|_{L^{1}(\mathbb{R}^{d})}
    = \int_{\mathbb{R}^{d}} A(\xi)\,d\xi,
\end{equation*}
and introduce the probability density 
\begin{equation*}
    p(\xi) := \frac{A(\xi)}{\Lambda}.
\end{equation*}
Then the Fourier representation of $f$ may be rewritten as an expectation:
\begin{equation*}
    f(x)
    =
    \Lambda\,\mathbb{E}_{\xi\sim p}\!\left[
        \cos\big(\xi\cdot x + \phi(\xi)\big)
    \right].
\end{equation*}
To obtain a Monte–Carlo approximation, let $\xi_1,\dots,\xi_n$ be independent samples drawn from $p$, and define
\begin{equation*}
    f_n(x):=\frac{\Lambda}{n}\sum_{j=1}^{n}\cos\!\big(\xi_j\cdot x + \phi(\xi_j)\big).
\end{equation*}
By construction, $\mathbb{E}[f_n(x)] = f(x) \;\text{for all } x\in\mathbb{R}^{d},$ so $f_n$ is an unbiased estimator of $f$ at each point. Since $|\cos(\cdot)|\le 1$, it follows that $    \operatorname{Var}\!\big(\cos(\xi\cdot x+\phi(\xi))\big)\le 1,$ and therefore
\begin{equation*}
    \operatorname{Var}\!\big(f_n(x)\big)=
    \frac{\Lambda^{2}}{n}\,
    \operatorname{Var}\!\big(\cos(\xi\cdot x+\phi(\xi))\big)
    \le
    \frac{\Lambda^{2}}{n}.
\end{equation*}
Integrating this pointwise variance bound with respect to $\mu$
\begin{equation*}
    \mathbb{E}\,\|f-f_n\|_{L^{2}(\mu)}^{2} =\int_{\mathbb{R}^{d}} \operatorname{Var}(f_n(x))\,d\mu(x)\le\frac{\Lambda^{2}}{n}.
\end{equation*}
Taking square roots yields $    \mathbb{E}\,\|f-f_n\|_{L^{2}(\mu)}\le\frac{\Lambda}{\sqrt{n}}.$ Since the expectation of a nonnegative random variable bounds its infimum over realizations, there exists at least one realization of the samples $\{\xi_1,\dots,\xi_n\}$ for which
\begin{equation*}
    \|f-f_n\|_{L^{2}(\mu)}
    \le
    \frac{\Lambda}{\sqrt{n}}
    =
    \frac{\|\widehat{f}\|_{L^{1}}}{\sqrt{n}}.
\end{equation*}
To realize $f_n$ with a QNN, observe that
\begin{equation*}
    f_n(x)
    =
    \sum_{j=1}^{n}
    w_j\,
    \cos\!\big(\xi_j\cdot x + \phi(\xi_j)\big),
    \qquad
    w_j = \frac{\Lambda}{n}.
\end{equation*}
By Lemma~\ref{lemma-1}, for each pair $(\xi_j,\phi(\xi_j))$ there exist QNN parameters for which a single qubit produces the expectation $\phi_j(x)=\cos\!\big(\xi_j\cdot x + \phi(\xi_j)\big).$ The same lemma guarantees that any coefficient $w_j>0$ can be incorporated into the classical readout weight assigned to that qubit. Thus, by assigning one qubit per cosine term and choosing classical readout weights $c_j=w_j$ and $c_0=0$, the resulting QNN output satisfies $    u_\theta(x)= f_n(x)\;\text{for all } x\in\mathbb{R}^{d}.$ Combining the identity $u_\theta=f_n$ with the earlier bound gives
\begin{equation*}
    \|f-u_\theta\|_{L^{2}(\mu)}
    =
    \|f-f_n\|_{L^{2}(\mu)}
    \le
    \frac{\|\widehat{f}\|_{L^{1}}}{\sqrt{n}},
\end{equation*}
which completes the proof.
\end{proof}

\begin{corollary}
Let $\mu$ be a probability measure on $\mathbb{R}^{d}$ and let 
$f \in C(\mathbb{R}^d)\cap L^1(\mathbb{R}^d)$.
Then for any $\varepsilon > 0$, there exist $n\in\mathbb{N}$ and $\theta=\{\boldsymbol{a},\boldsymbol{b},\Gamma,\boldsymbol{c},c_0\},$ such that the corresponding QNN output $u_{\theta}(\boldsymbol{x})$ satisfies
\begin{equation*}
    \|f(\boldsymbol{x}) - u_{\theta}(\boldsymbol{x})\|_{L^2(\mu)}
    \le \varepsilon.
\end{equation*}
\end{corollary}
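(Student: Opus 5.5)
The plan is a density argument: reduce to the setting of Theorem~\ref{thm:l2_uat_concise} by replacing $f$ with a nearby function whose Fourier transform is integrable, then add up the errors with the triangle inequality. The corollary drops the hypothesis $\widehat f\in L^1(\mathbb{R}^d)$, so we cannot apply the theorem to $f$ directly. Fix $\varepsilon>0$. We will build an intermediate function $h_\sigma\in C(\mathbb{R}^d)\cap L^1(\mathbb{R}^d)$ with $\widehat{h_\sigma}\in L^1(\mathbb{R}^d)$ and
\[
\|f-h_\sigma\|_{L^2(\mu)}\le \tfrac{2\varepsilon}{3}.
\]
Then we apply Theorem~\ref{thm:l2_uat_concise} to $h_\sigma$ with $n\ge \bigl(3\|\widehat{h_\sigma}\|_{L^1}/\varepsilon\bigr)^2$ qubits. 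This gives parameters $\theta$ with $\|h_\sigma-u_\theta\|_{L^2(\mu)}\le\varepsilon/3$, and the triangle inequality finishes the proof.

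\textbf{Step 1 (truncation).} Let $\chi_R$ be a continuous cutoff with $0\le\chi_R\le1$, equal to $1$ on the ball $\{\|\boldsymbol{x}\|\le R\}$ and $0$ outside $\{\|\boldsymbol{x}\|\le R+1\}$. Set $h:=f\chi_R$, which is continuous with compact support. Then
\[
\|f-h\|_{L^2(\mu)}^2\le\int_{\{\|\boldsymbol{x}\|>R\}}|f|^2\,d\mu .
\]
This tends to $0$ as $R\to\infty$ by dominated convergence, provided $f\in L^2(\mu)$. Choose $R$ so that this quantity is at most $(\varepsilon/3)^2$.

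\textbf{Step 2 (mollification).} Let $g_\sigma$ be the centered Gaussian of variance $\sigma^2$ and set $h_\sigma:=h*g_\sigma$. Since $h$ is uniformly continuous and bounded, $h_\sigma\to h$ uniformly as $\sigma\to0$. Because $\mu$ is a probability measure,
\[
\|h-h_\sigma\|_{L^2(\mu)}\le\|h-h_\sigma\|_\infty .
\]
Pick $\sigma$ with this at most $\varepsilon/3$. We then check the hypotheses of the theorem for $h_\sigma$.
\begin{itemize}
\item $h_\sigma$ is continuous and lies in $L^1$, since it is the convolution of two $L^1$ functions.
\item Its transform is $\widehat{h_\sigma}(\xi)=\widehat h(\xi)\,e^{-\sigma^2\|\xi\|^2/2}$, with $|\widehat h|\le\|h\|_{L^1}$. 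Hence $\|\widehat{h_\sigma}\|_{L^1}\le\|h\|_{L^1}\,\|e^{-\sigma^2\|\cdot\|^2/2}\|_{L^1}<\infty$.
\end{itemize}

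\textbf{Step 3 (quantum approximation).} Apply Theorem~\ref{thm:l2_uat_concise} to $h_\sigma$. Its realization uses Lemma~\ref{lemma-1}, one qubit per cosine ridge term. This yields $n$ and $\theta$ with $\|h_\sigma-u_\theta\|_{L^2(\mu)}\le\varepsilon/3$. Summing the three errors gives $\|f-u_\theta\|_{L^2(\mu)}\le\varepsilon$.

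\textbf{Main obstacle.} The delicate point is Step 1. Every QNN output is bounded by $\sum_i|c_i|+|c_0|$, since each expectation $\phi_i$ lies in $[-1,1]$. Consequently no $u_\theta$ can approximate an $f$ that fails to lie in $L^2(\mu)$. Membership in $C(\mathbb{R}^d)\cap L^1(\mathbb{R}^d)$ does not ensure $f\in L^2(\mu)$ for an arbitrary probability measure $\mu$: an unbounded continuous integrable $f$ paired with $\mu$ charging its peaks is a counterexample. I would therefore make $f\in L^2(\mu)$ explicit. It holds automatically if $f$ is bounded, or if $\mu$ has compact support; in those cases Step 1 is immediate. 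I would add this as a standing remark in the proof.
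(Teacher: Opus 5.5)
Your proof is correct, and it takes a genuinely different route from the paper's. The paper's proof is one line: apply Theorem~\ref{thm:l2_uat_concise} to $f$ itself and choose $n$ with $\|\widehat f\|_{L^1}/\sqrt{n}\le\varepsilon$. That step needs $\widehat f\in L^1(\mathbb{R}^d)$, which is exactly the hypothesis the corollary drops. So, as written, the paper's argument only covers the case the theorem already handles, presumably the intended statement. In that case $f$ is automatically bounded by $(2\pi)^{-d}\|\widehat f\|_{L^1}$, so your $L^2(\mu)$ issue never arises. The paper's route then gives an explicit count, $n=\mathcal{O}(\|\widehat f\|_{L^1}^2\varepsilon^{-2})$.

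Your truncate--mollify--approximate argument instead works with the hypotheses actually stated. Your observation that every $u_\theta$ is bounded by $\sum_i|c_i|+|c_0|$ shows that adding $f\in L^2(\mu)$ is necessary, not a weakness of your proof. For a concrete counterexample, take tent spikes of height $k$ and base width $k^{-3}$ at $x=k$, with $\mu\propto\sum_k k^{-2}\delta_k$. This $f$ is continuous and integrable but not in $L^2(\mu)$, so the corollary as literally stated is false.

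The price of your route is that you lose any rate. Your bound $\|\widehat{h_\sigma}\|_{L^1}\le(2\pi)^{d/2}\sigma^{-d}\|h\|_{L^1}$ blows up as $\sigma\to0$, and $\sigma$ is set by the unquantified modulus of continuity of $h$. What you gain is more than the corollary. Your argument never uses $f\in L^1(\mathbb{R}^d)$, since $h$ is compactly supported. If you replace Step~1 by density of $C_c(\mathbb{R}^d)$ in $L^2(\mu)$, which holds for every Borel probability measure on $\mathbb{R}^d$, continuity is no longer needed either. You then get density of QNN outputs in $L^2(\mu)$ with no extra assumption. That settles the paper's later remark, which merely assumes $\mathcal{S}$ is dense.

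One small repair: in Step~2, continuity of $h_\sigma$ does not follow from its being a convolution of two $L^1$ functions. It follows from $h\in L^\infty$ and $g_\sigma\in L^1$, or from the smoothness of $g_\sigma$.
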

\begin{proof}
The result follows immediately from Theorem~\ref{thm:l2_uat_concise} by choosing $n\in\mathbb{N}$ such that $\|\widehat f\|_{L^1}/\sqrt{n}\le \varepsilon$.
\end{proof}

\begin{remark}
The approximation bound in Theorem~\ref{thm:l2_uat_concise} implies that achieving an $\mathcal{L}^2$-error tolerance $\varepsilon>0$ requires 
$n=\mathcal{O}(\varepsilon^{-2})$ cosine ridge functions. Since each such ridge function is realized by a single qubit with a constant number of trainable parameters, the total number of trainable parameters in the QNN scales as $\mathcal{O}(\varepsilon^{-2})$. Although the approximation requires $\mathcal{O}(\varepsilon^{-2})$ trigonometric features, the number of physical qubits needed to represent these features grows only logarithmically with the target accuracy. By exploiting the finite Fourier structure established in Proposition~\ref{prop-1} and Corollary~\ref{coroll-1}, 
$\mathcal{O}(\lceil \log_2(\varepsilon^{-1}) \rceil)$ qubits suffice to generate the required trigonometric feature expansion, with the remaining complexity absorbed into the trainable circuit parameters.
\end{remark}

\begin{remark}
Denote $\mathcal{S}
:=\{ g\in C(\mathbb{R}^d)\cap L^1(\mathbb{R}^d) : \widehat{g}\in L^1(\mathbb{R}^d)\}.$
If $\mathcal{S}$ is dense in $L^2(\mathbb{R}^d,\mu)$, then for any
$f\in L^2(\mathbb{R}^d,\mu)$ and any $\varepsilon>0$ there exist
$n\in\mathbb{N}$ and QNN parameters $\theta$ such that $\| f - u_{\theta} \|_{L^2(\mu)} \le \varepsilon.$
In this case, the proposed QNN architecture is dense in $L^2(\mathbb{R}^d,\mu)$ and therefore constitutes a universal approximator for square-integrable functions with respect to the measure $\mu$.
\end{remark}
The results of this section establish that the proposed QNN architecture admits a finite trigonometric feature representation and satisfies explicit quantitative $\mathcal{L}^2$ approximation guarantees for a broad class of functions with integrable Fourier transforms. In particular, the approximation error decays at a rate $\mathcal{O}(n^{-1/2})$, while the required number of qubits grows only logarithmically with respect to the target accuracy. These properties provide a rigorous theoretical foundation for employing QNNs as expressive function approximators in scientific computing. In the following sections, we build upon this approximation theory to develop two main QPINN approaches for solving nonlinear IDEs and FIPDEs. Rather than focusing on expressivity, the subsequent methodology addresses the incorporation of nonlocal operators, integral terms, and fractional derivatives within a physics-informed quantum learning framework.

\section{Proposed Methodology}\label{methodology}

In this section, we present a QPINN-based framework for solving general nonlinear IDEs \eqref{model-general-IDE}, systems of IDEs \eqref{model-general-system}, and FPIDEs \eqref{model-general-FIPDE}, which serve as our model problems. We introduce two complementary strategies and provide a unified structure for addressing integro-differential and fractional integro-partial differential equations. The first strategy, termed Numerical-quadrature based QPINN (N-QPINN), approximates integral and fractional operators using numerical quadrature schemes such as the composite trapezoidal rule or Gaussian quadrature. These discretized expressions are incorporated directly into the loss function, mirroring the classical fPINN approach but leveraging quantum neural networks as the underlying function approximators. The second strategy, termed Auxiliary-function based QPINN (A-QPINN), reformulates integral operators by introducing auxiliary functions that represent the integral terms explicitly. These auxiliary functions are learned jointly with the primary solution within the QPINN architecture. While the classical A-PINN cannot be extended to FPIDEs due to the inherent nonlocality of fractional derivatives and their lack of closed-form differential reformulations. The following subsections provide detailed descriptions of these two approaches.

\subsection{Numerical-quadrature based QPINN (N-QPINN)}\label{subsec-4.1}

The N-QPINN provides a unified framework for approximating the integral and fractional operators appearing in the general models \eqref{model-general-IDE}, \eqref{model-general-system}, and \eqref{model-general-FIPDE}. As illustrated in Figure~\ref{fig:qnn_NQPINN}, the variational quantum circuit maps the inputs $\boldsymbol{x}$ or $(\boldsymbol{x},t)$ to the QNN output, which serves as the approximate solution $u_\theta(\boldsymbol{x})$ or $u_\theta(\boldsymbol{x}, t)$. In this formulation, all nonlocal operators, including spatial, spatio-temporal, and kernel-based integrals, as well as fractional integrals and derivatives, are approximated using suitable numerical quadrature schemes (e.g., trapezoidal, Simpson’s, Gauss--Legendre, or tensor-product rules). In contrast, integer-order differential operators are evaluated using automatic differentiation applied directly to the QNN output. This hybrid strategy enables a flexible and unified treatment of nonlocal and fractional terms, making the N-QPINN framework applicable to nonlinear IDEs, coupled IDE systems, and fractional integro–PDEs. The governing equations and associated constraints are enforced through the composite loss function
\begin{equation}\label{total-loss}
    \mathcal{L}_{\mathrm{Total}}
    =
    \mathcal{L}_{\mathrm{PDE}}
    +
    \mathcal{L}_{\mathrm{BC}}
    +
    \mathcal{L}_{\mathrm{IC}}.
\end{equation}
where
\begin{equation}
\begin{cases}
\displaystyle
\mathcal{L}_{\mathrm{PDE}}
=
\frac{1}{N_R}
\sum_{i=1}^{N_R}
\bigl|\widehat{\mathcal{R}}(\boldsymbol{x}_i)\bigr|^2, \\[10pt]
\displaystyle
\mathcal{L}_{\mathrm{BC}}
=
\frac{1}{N_B}
\sum_{i=1}^{N_B}
\bigl|\mathscr{B}[u_\theta](\boldsymbol{x}_i) - g(\boldsymbol{x}_i)\bigr|^2, \\[10pt]
\displaystyle
\mathcal{L}_{\mathrm{IC}}
=
\frac{1}{N_I}
\sum_{i=1}^{N_I}
\bigl|\partial^{\boldsymbol{k}} u_\theta(\boldsymbol{x}_i) - a_{\boldsymbol{k}}\bigr|^2.
\end{cases}
\end{equation}
For the nonlinear IDE in Model Problem~\eqref{model-general-IDE}, the integral operator can be approximated as
\begin{equation}\label{eq:quadrature-general}
\int_{\mathcal{D}(\boldsymbol{x})}
\mathscr{K}(\boldsymbol{x},\boldsymbol{z})\,
\mathcal{N}(u_\theta(\boldsymbol{z}))\, d\boldsymbol{z}
\;\approx\;
\sum_{p=1}^{N_q}
w_p\,
\mathscr{K}(\boldsymbol{x},\boldsymbol{z}_p)\,
\mathcal{N}(u_\theta(\boldsymbol{z}_p)),
\end{equation}
and the corresponding PDE residual at $\boldsymbol{x}_i$ becomes
\begin{equation}
\widehat{\mathcal{R}}(\boldsymbol{x}_i)
=
\mathscr{L}[u_\theta(\boldsymbol{x}_i)]
-
f(\boldsymbol{x}_i)
-
\lambda
\sum_{p=1}^{N_q}
w_p\, 
\mathscr{K}(\boldsymbol{x}_i,\boldsymbol{z}_p)\,
\mathcal{N}(u_\theta(\boldsymbol{z}_p)).
\end{equation}
Similarly, for the coupled system of IDEs in Model Problem~\eqref{model-general-system}, each component equation yields the residual
\begin{equation}
\begin{aligned}
\widehat{\mathcal{R}}_i(\boldsymbol{x}_i)
&=
\mathscr{L}_i[u_{1,\theta},\dots,u_{m,\theta}](\boldsymbol{x}_i)
-
f_i(\boldsymbol{x}_i)\\
&\qquad-\lambda_i
\sum_{p=1}^{N_q}
w_p\,
\mathscr{K}_i(\boldsymbol{x}_i,\boldsymbol{z}_p)\,
\mathcal{N}_i\!\left(
u_{1,\theta}(\boldsymbol{z}_p),
\dots,
u_{m,\theta}(\boldsymbol{z}_p)
\right),
\;
i=1,\dots,m.
\end{aligned}
\end{equation}
For the fractional integro–PDE \eqref{model-general-FIPDE}, the fractional operators are discretized as
\[
D_t^\alpha u_\theta(\boldsymbol{x},t)
\approx 
\sum_{q=1}^{N_t} \omega_q^{(\alpha)}
\bigl[u_\theta(\boldsymbol{x},t_q)-u_\theta(\boldsymbol{x},0)\bigr],
\qquad
\mathcal{I}_t^\beta u_\theta(\boldsymbol{x},t)
\approx 
\sum_{q=1}^{N_t} v_q^{(\beta)}\,u_\theta(\boldsymbol{x},t_q),
\]
and the space–time nonlocal term is approximated by
\begin{equation}
\begin{aligned}
\iint\limits_{\mathcal{D}(\boldsymbol{x},t)}
\mathscr{K}((\boldsymbol{x},t),(\boldsymbol{z},s))
\mathcal{N}(u_\theta(\boldsymbol{z},s))\, d\boldsymbol{z}\,ds
\approx\;
\sum_{p,q}
w_p w_q\,
\mathscr{K}((\boldsymbol{x},t),(\boldsymbol{z}_p,s_q))
\mathcal{N}(u_\theta(\boldsymbol{z}_p,s_q)).
\end{aligned}
\end{equation}
Across all three model problems, these quadrature-based discretizations convert integral and fractional operators into finite weighted sums that interface seamlessly with the QPINN loss. After assembling $\mathcal{L}_{\mathrm{Total}}$, a classical optimizer updates the quantum parameters $\theta$ in the hybrid loop until convergence, completing the N-QPINN training process illustrated in Figure~\ref{fig:qnn_NQPINN}.
\begin{figure}[ht] 
\centering 
\includegraphics[width=1\linewidth]{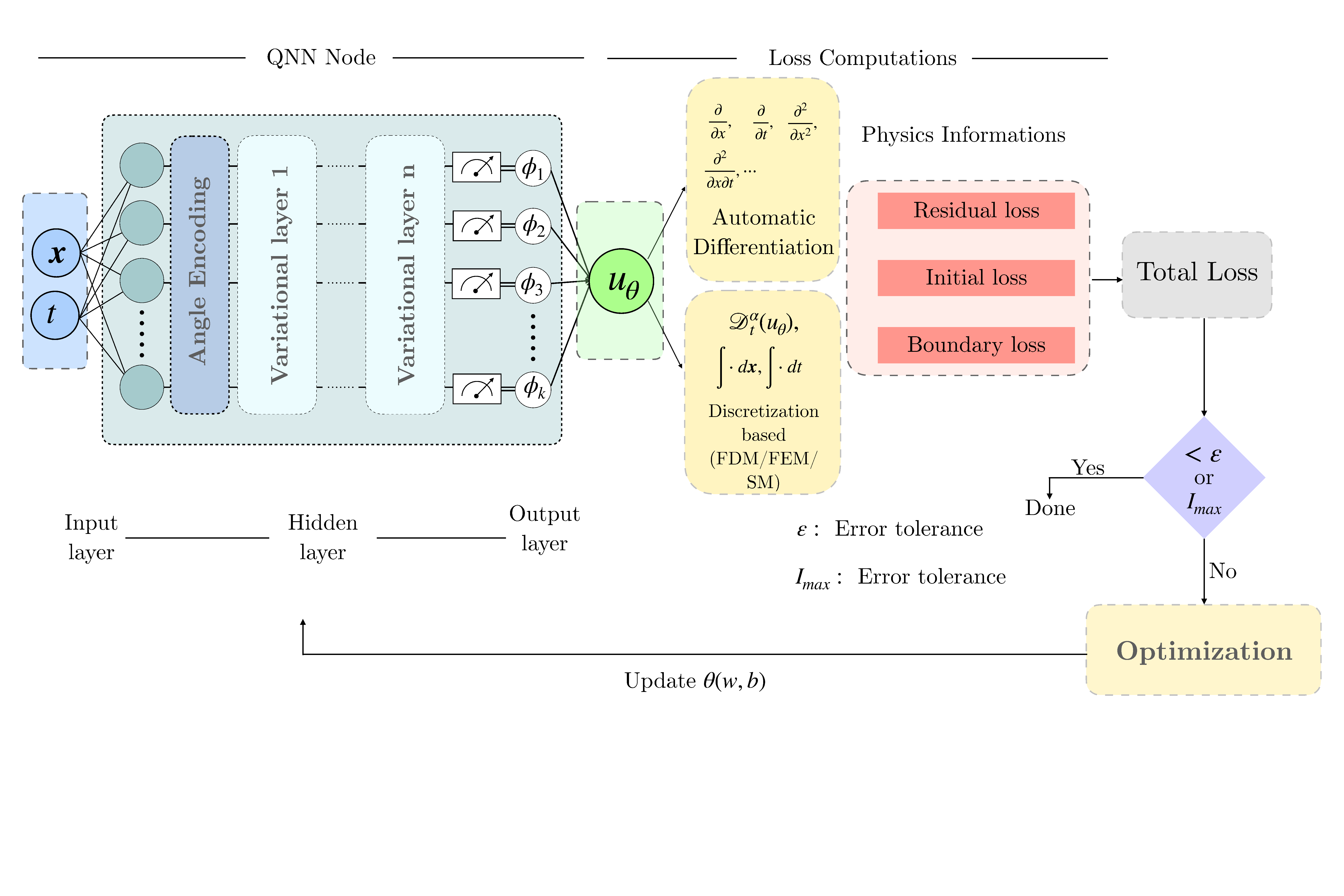} 
\caption{General N-QPINN architecture for solving model problems~\eqref{model-general-IDE},\eqref{model-general-system} and \eqref{model-general-FIPDE} involving nonlocal and fractional operators.} 
\label{fig:qnn_NQPINN}
\end{figure}
\subsection{Auxiliary-function based QPINN (A-QPINN)}\label{subsec-4.2}
The A-QPINN provides a quadrature-free mechanism for solving the nonlocal integro–differential models~\eqref{model-general-IDE} and~\eqref{model-general-system} by introducing auxiliary fields that represent the action of the integral operators, thereby converting the governing equations into a fully local system of coupled differential equations. As shown in Figure~\ref{fig:AQPINN_architecture}, the variational quantum circuit generates a multi-output approximation $(u_\theta(\boldsymbol{x}), v_\theta(\boldsymbol{x}), w_\theta(\boldsymbol{x}))$ that simultaneously predicts the primary solution and the auxiliary variables. Automatic differentiation is then used to evaluate every differential operator in the transformed local formulation, eliminating the need for numerical quadrature and significantly accelerating training. Because fractional operators in Model Problem~\eqref{model-general-FIPDE} cannot be rewritten as local consistency relations, the A-QPINN methodology applies only to Models~\eqref{model-general-IDE}–\eqref{model-general-system}.
For Model Problem~\eqref{model-general-IDE}, the nonlocal integral term is replaced by the auxiliary function
\[
v(\boldsymbol{x})
:=
\int_{\mathcal{D}(\boldsymbol{x})}
\mathscr{K}(\boldsymbol{x},\boldsymbol{z})\,
\mathcal{N}(u(\boldsymbol{z}))\,d\boldsymbol{z},
\]
which reformulates \eqref{model-general-IDE} into the coupled local system
\begin{equation}\label{AQPINN-MI-system}
\begin{cases}
\displaystyle 
\mathscr{L}[u(\boldsymbol{x})]
=
f(\boldsymbol{x}) + \lambda\, v(\boldsymbol{x}),
\qquad \boldsymbol{x}\in\Omega, \\[10pt]
\displaystyle
\mathscr{N}[v(\boldsymbol{x})]
=
h(\boldsymbol{x},u(\boldsymbol{x})), \\[10pt]
\displaystyle 
\partial^{\boldsymbol{k}} u(\boldsymbol{x}_0)=a_{\boldsymbol{k}},
\qquad |\boldsymbol{k}|<N, \\[6pt]
\displaystyle 
\partial^{\boldsymbol{\gamma}} v(\boldsymbol{x}_0)=a_{\boldsymbol{\gamma}},
\qquad |\boldsymbol{\gamma}|<P, \\[6pt]
\displaystyle 
\mathscr{B}_j[u(\boldsymbol{x})]=g_j(\boldsymbol{x}),
\qquad 
\mathscr{C}_j[v(\boldsymbol{x})]=c_j(\boldsymbol{x}),
\quad \boldsymbol{x}\in\partial\Omega,\;\; j=1,\dots,n.
\end{cases}
\end{equation}
For the nonlinear coupled system in Model Problem~\eqref{model-general-system}, the same construction is performed componentwise. Introducing
\[
v_i(\boldsymbol{x})
=
\int_{\mathcal{D}(\boldsymbol{x})}
\mathscr{K}_i(\boldsymbol{x},\boldsymbol{z})\,
\mathcal{N}_i(u_1(\boldsymbol{z}),\dots,u_m(\boldsymbol{z}))\, d\boldsymbol{z},
\qquad i=1,\dots,m,
\]
the integro–differential system becomes
\begin{equation}\label{AQPINN-MII-system}
\begin{cases}
\displaystyle
\mathscr{L}_i[u_1,\dots,u_m](\boldsymbol{x})
=
f_i(\boldsymbol{x}) + \lambda_i\, v_i(\boldsymbol{x}),
\qquad \boldsymbol{x}\in\Omega,\;\; i=1,\dots,m, \\[10pt]
\displaystyle
\mathscr{N}_i[v_i(\boldsymbol{x})]
=
h_i(\boldsymbol{x},u_1(\boldsymbol{x}),\dots,u_m(\boldsymbol{x})),
\qquad i=1,\dots,m, \\[10pt]
\displaystyle
\partial^{\boldsymbol{k}} u_i(\boldsymbol{x}_0)=a_{i,\boldsymbol{k}},
\qquad |\boldsymbol{k}|<N_i, \\[6pt]
\displaystyle
\partial^{\boldsymbol{\gamma}} v_i(\boldsymbol{x}_0)=a_{i,\boldsymbol{\gamma}},
\qquad |\boldsymbol{\gamma}|<P_i, \\[6pt]
\displaystyle
\mathscr{B}_{i,j}[u_i(\boldsymbol{x})]=g_{i,j}(\boldsymbol{x}),
\qquad
\mathscr{C}_{i,j}[v_i(\boldsymbol{x})]=c_{i,j}(\boldsymbol{x}),
\quad \boldsymbol{x}\in\partial\Omega,\;\; j=1,\dots,n_i.
\end{cases}
\end{equation}
The hybrid quantum–classical model enforces the coupled PDE systems \eqref{AQPINN-MI-system}–\eqref{AQPINN-MII-system} through the total loss
\[
\mathcal{L}_{\mathrm{Total}}
=
\mathcal{L}_{\mathrm{PDE}}
+
\mathcal{L}_{\mathrm{Aux}}
+
\mathcal{L}_{\mathrm{BC}}
+
\mathcal{L}_{\mathrm{IC}},
\]
where the physics residuals for Model Problem~\eqref{AQPINN-MI-system} are
\[
\mathcal{L}_{\mathrm{PDE}}
=
\frac{1}{N_R}\sum_{i=1}^{N_R}
\Big|
\mathscr{L}[u_\theta(\boldsymbol{x}_i)]
-
f(\boldsymbol{x}_i)
-
\lambda\, v_\theta(\boldsymbol{x}_i)
\Big|^2,
\
\mathcal{L}_{\mathrm{Aux}}
=
\frac{1}{N_R}\sum_{i=1}^{N_R}
\Big|
\mathscr{N}[v_\theta(\boldsymbol{x}_i)]
-
h(\boldsymbol{x}_i,u_\theta(\boldsymbol{x}_i))
\Big|^2.
\]
The boundary and initial losses take the form
\[
\mathcal{L}_{\mathrm{BC}}
=
\frac{1}{N_B}\sum_{i=1}^{N_B}
\Big(
|\mathscr{B}[u_\theta](\boldsymbol{x}_i)-g(\boldsymbol{x}_i)|^2
+
|\mathscr{C}[v_\theta](\boldsymbol{x}_i)-c(\boldsymbol{x}_i)|^2
\Big),
\]
\[
\mathcal{L}_{\mathrm{IC}}
=
\frac{1}{N_I}\sum_{i=1}^{N_I}
\Big(
|\partial^{\boldsymbol{k}}u_\theta(\boldsymbol{x}_i)-a_{\boldsymbol{k}}|^2
+
|\partial^{\boldsymbol{\gamma}}v_\theta(\boldsymbol{x}_i)-a_{\boldsymbol{\gamma}}|^2
\Big).
\]
For the multi-field system \eqref{AQPINN-MII-system}, the loss components extend naturally to
\[
\mathcal{L}_{\mathrm{PDE}}
=
\sum_{i=1}^{m}
\frac{1}{N_R}
\sum_{j=1}^{N_R}
\Big|
\mathscr{L}_i[u_{1,\theta},\dots,u_{m,\theta}](\boldsymbol{x}_j)
-
f_i(\boldsymbol{x}_j)
-
\lambda_i v_{i,\theta}(\boldsymbol{x}_j)
\Big|^2,
\]
\[
\mathcal{L}_{\mathrm{Aux}}
=
\sum_{i=1}^{m}
\frac{1}{N_R}
\sum_{j=1}^{N_R}
\Big|
\mathscr{N}_i[v_{i,\theta}(\boldsymbol{x}_j)]
-
h_i(\boldsymbol{x}_j,u_{1,\theta}(\boldsymbol{x}_j),\dots,u_{m,\theta}(\boldsymbol{x}_j))
\Big|^2,
\]
and analogous boundary and initial losses for all components. A classical optimizer updates the variational parameters $\theta$ until convergence. Because the A-QPINN avoids numerical quadrature and relies exclusively on automatic differentiation, it offers a highly efficient and scalable framework for solving nonlocal integro–differential models \eqref{model-general-system},\eqref{model-general-IDE}.
\begin{figure}[ht]
    \centering
    \includegraphics[width=1\linewidth]{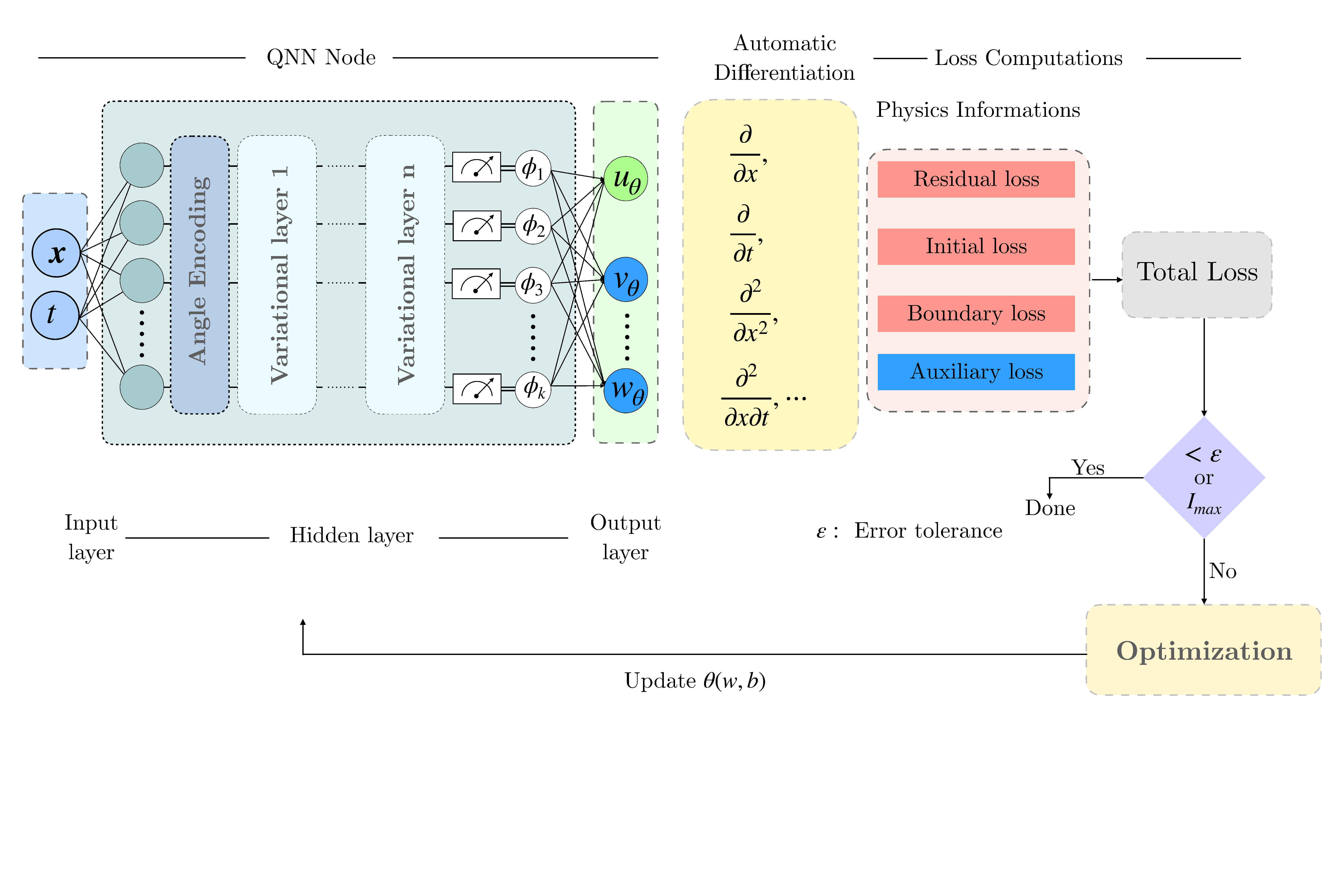}
    \caption{General A-QPINN architecture for solving Models~\eqref{model-general-IDE} and \eqref{model-general-system} via auxiliary-function reformulation of integral operators operators.}
    \label{fig:AQPINN_architecture}
\end{figure}
\section{Numerical Experiments}\label{num-exp} 
All neural network weights were initialized using the Xavier initialization to promote a stable gradient propagation during training. For quantitative evaluation, the relative $\mathcal{L}_2$-error is computed over a uniformly sampled validation set, defined as
\begin{equation}
    \text{Relative }\mathcal{L}_2\text{-error} = \frac{\| u_{\theta}(\boldsymbol {x}) - u_{\text{ref}}(\boldsymbol {x}) \|_{L^2(\Omega)}}{\| u_{\text{ref}}(\boldsymbol {x}) \|_{L^2(\Omega)}},
\end{equation}
where $u_{\theta}(\boldsymbol {x})$ denotes the network prediction and $u_{\text{ref}}(\boldsymbol {x})$ represents the reference solution. All simulations were conducted using the open-source frameworks \texttt{PennyLane} version $0.40.0$ and \texttt{PyTorch} version $2.6.0$.
 Quantum circuits were constructed and simulated using PennyLane’s built-in \texttt{default.qubit} device, which is a noiseless state-vector simulator designed for ideal quantum circuit simulations on qubit-based architectures. Implementation of the N-QPINN and A-QPINN leveraged PennyLane’s quantum node interface. Numerical experiments were performed on an NVIDIA RTX A4500 GPU using CUDA version 12.4. Given the sensitivity of the neural network training to weight initialization, the reported results correspond to the average over 5 independent training runs to ensure statistical robustness.
\subsection{One-dimensional nonlinear Volterra equation}\label{Example-1}
    In our first experiment, we investigate the capability of the proposed method in solving a one-dimensional nonlinear Volterra integro-differential equation. The benchmark problem is given by
\begin{equation}\label{ex1-1dVolterra}
    \frac{du(x)}{dx} + u(x) 
    = \lambda \int_{0}^{x} e^{\,t - x} \, y(t) \, dt, 
    \quad x \in [0,5],
\end{equation}
with the initial condition \( u(0) = 1 \) and parameter \(\lambda = 1\). The exact solution of this problem is $u(x) = e^{-x} \cosh(x)$.
We solve this IDE using both the methods N-QPINN and A-QPINN described in subsections \ref{subsec-4.1} and \ref{subsec-4.2}, respectively. 
For solving IDE \eqref{ex1-1dVolterra} with the N-QPINN approach, the first step is to approximate the integral term using a numerical quadrature scheme. 
In particular, we employ the Gaussian--Legendre quadrature of degree $20$ to approximate
\begin{equation}\label{ex1-quadrature}
    \int_{0}^{x} e^{\,t-x} u(t) \, dt 
    \;\approx\; \sum_{i=1}^{20} w_i \, e^{\,t_i(x)-x} \, u\!\bigl(t_i(x)\bigr), \qquad t_i(x) = \tfrac{x}{2}\,(\xi_i+1),
\end{equation}
where \( \{ w_i \}_{i=1}^{20} \) and \( \{ \xi_i \}_{i=1}^{20} \) denote the Gauss--Legendre weights and nodes on $[-1,1]$, respectively, and $t_i(x)$
are the quadrature points mapped to the interval \([0,x]\). Substituting \eqref{ex1-quadrature} into \eqref{ex1-1dVolterra}, the original integro-differential equation reduces to
\begin{equation}
    \displaystyle\dfrac{du(x)}{dx} + u(x) 
    \;\approx\; \lambda \sum_{i=1}^{20} w_i \, e^{\,t_i(x)-x} \, u\!\bigl(t_i(x)\bigr),
    \quad x \in [0,5], \quad u(0) = 1.
\end{equation}
This reduced form is then used in the loss construction for the N-QPINN method. Let say QNN approximate $u(x)$ as $u_{\theta}(x)$ then the residual at interior collocation points $x_j \in [0,5]$ is 
\begin{equation}
\widehat{\mathcal{R}}(x_j) := \displaystyle u'_{\theta}(x_j) + u_{\theta}(x_j) 
    \;-\; \lambda \sum_{i=1}^{20} w_i \, e^{\,t_i(x_j)-x_j} \, u_{\theta}\!\bigl(t_i(x_j)\bigr),
\end{equation}
So the total loss is constructed as 
$\mathcal{L}_{total} = \mathcal{L}_{IDE}+\mathcal{L}_{IC}$.
where,
\begin{equation}
\displaystyle\mathcal{L}_{IDE} = \frac{1}{N_R}\sum_{j=1}^{N_R} \bigl|\widehat{\mathcal{R}}(x_j)\bigr|^2,\qquad
          \mathcal{L}_{IC} = |\hat{u}(0;\theta)-u(0)|^2.
\end{equation}
To solve IDE \eqref{ex1-1dVolterra} with A-QPINN aprroach, first we define an auxiliary function
$v(x) = \int_{0}^{x} e^{\,t-x} \, u(t) \, dt,$
representing the integral term in \eqref{ex1-1dVolterra}. Differentiating \( v(x) \) with respect to \( x \) by applying Leibniz’s rule yields
\begin{equation*}
    \frac{dv(x)}{dx} 
    = u(x) - \int_{0}^{x} e^{\,t-x} \, u(t) \, dt
    = u(x) - v(x).
\end{equation*}
Hence, the original Volterra IDE~\eqref{ex1-1dVolterra} is equivalent to the first-order system of differential equation 
\begin{equation}\label{ex1-system}
    \left\{
    \begin{array}{ll}
          \dfrac{du(x)}{dx} = \lambda v(x) - u(x), \quad x \in [0, 5], \\[10pt]
          \dfrac{dv(x)}{dx} = u(x) - v(x),\quad u(0) = 1, \; v(0) = 0. \\[10pt]
          
     \end{array}
    \right.
\end{equation}
Then a two-output QNN is implemented to approximate  $u(x)$ and $v(x)$ in \eqref{ex1-system}. The total loss corresponding to this approach is constructed as $\mathcal{L}_{total} = \mathcal{L}_{IDE}+\mathcal{L}_{Aux} +\mathcal{L}_{IC},$ where 
\begin{equation}
    \left\{
    \begin{array}{ll}
              \displaystyle\mathcal{L}_{IDE} = \frac{1}{N_R}\sum_{j=1}^{N_R} \bigl| u'_{\theta}(x_j)-\lambda v_{\theta}(x_j)+ u_{\theta}(x_j)\bigr|^2,  \\[10pt]
              \displaystyle\mathcal{L}_{Aux} = \frac{1}{N_R}\sum_{j=1}^{N_R} \bigl| v_{\theta}'(x_j)- u_{\theta}(x_j) - v_{\theta}(x_j)\bigr|^2,\\[20pt]
          \mathcal{L}_{IC} = |u_{\theta}(0)-u(0)|^2+|v_{\theta}(0)-v(0)|^2.\\[10pt]
     \end{array}
    \right.
\end{equation}
 In this experiment, both quantum approaches A-QPINN and N-QPINN employ a QNN architecture consisting of $4$ variational layers and $8$ qubits. For comparison, we adopt a classical A-PINN architecture similar in size to the quantum models, following~\cite{Yuan2022APINN}, configured with $4$ hidden layers containing $8$ neurons each. All models are trained using $50$ collocation points randomly sampled from the problem domain, and the predictive accuracy is assessed using the relative $\mathcal{L}_2$-error.
\begin{table}[H]
\centering
\caption{Comparison of different methods for solving \eqref{ex1-1dVolterra}.}
\begin{tabular}{lccc}
\toprule
Method & Size& $\mathcal{L}_2$-error &  Trainable parameters \\
\midrule
Classical A-PINN~\cite{Yuan2022APINN}  & $8\times 4$ &  $1.09 \pm 0.32\times 10^{-3}$ & $322$ \\
N-QPINN (This work)                    & $8\times 4$  & $2.591 \pm 1.062 \times 10^{-4}$  & $121$ \\
A-QPINN (This work)                    & $8\times 4$  & $5.019 \pm 1.765 \times 10^{-4}$  & $130$ \\
\bottomrule
\label{tab:error_comparison}
\end{tabular}
\end{table}
\begin{figure}[ht]
    \centering
    \includegraphics[width=1\linewidth]{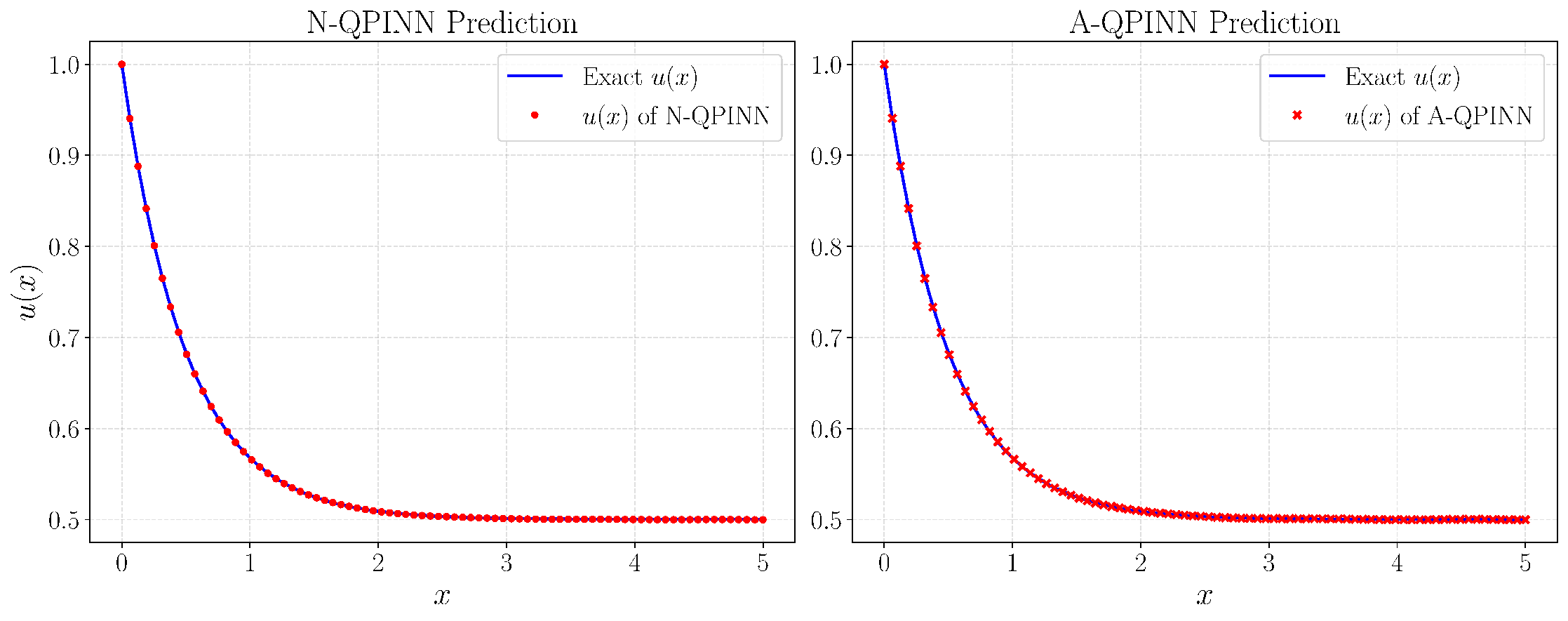}
    \caption{Predicted solutions of the 1D Volterra IDE \eqref{Example-1} using N-QPINN (left) and A-QPINN (right). The solid blue line represents the exact solution, while the red markers denote the network predictions.}
    \label{fig:Ex1-ExactvsPred}
\end{figure}
The comparison between the predicted solutions of the Volterra IDE~\eqref{ex1-1dVolterra} obtained using the proposed A-QPINN and N-QPINN models and the corresponding exact solution is presented in Figure~\ref{fig:Ex1-ExactvsPred}. A quantitative comparison of the methods is presented in Table~\ref{tab:error_comparison}. Both N-QPINN and A-QPINN outperform the classical A-PINN, demonstrating improved accuracy of the proposed quantum architectures. Among them, N-QPINN achieves the best performance, while A-QPINN also provides a clear improvement over the classical baseline.
\begin{figure}[ht]
    \centering
    \includegraphics[width=1\linewidth]{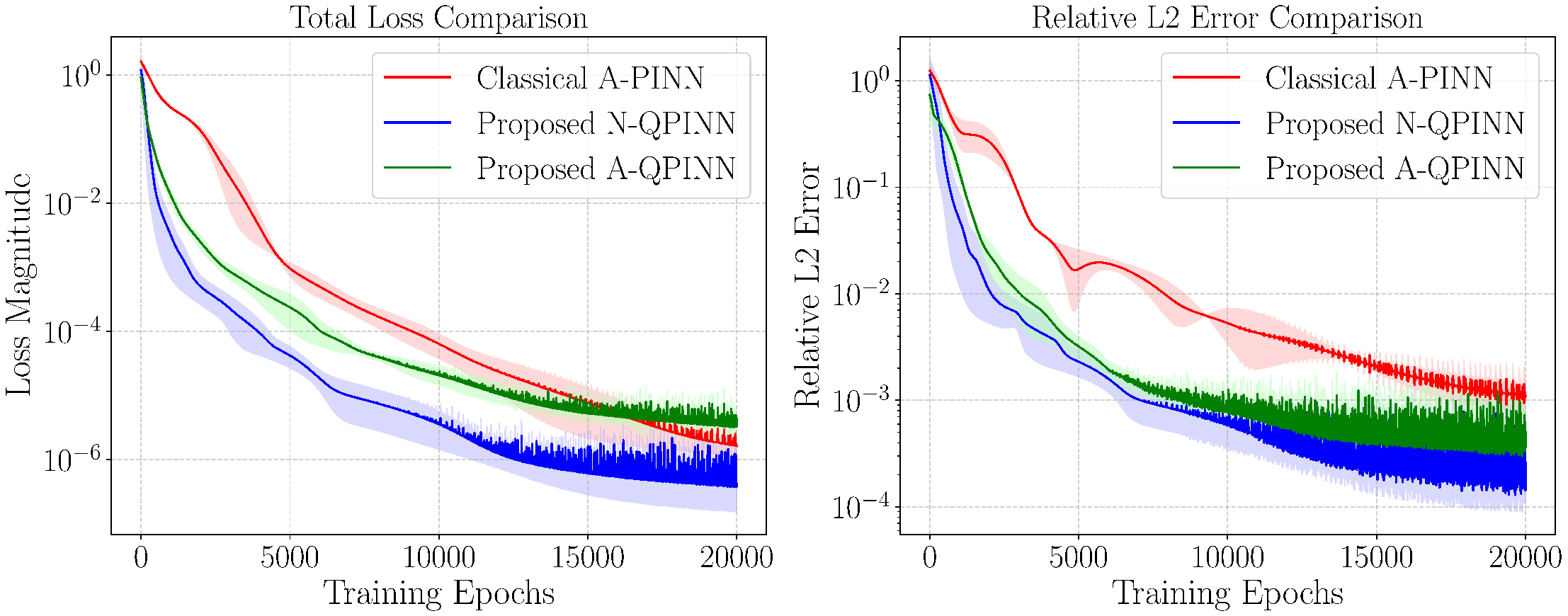}
    \caption{Total Loss (left) and relative $\mathcal{L}_2$-error (right) comparison of the 1D Volterra IDE \eqref{Example-1} with classical A-PINN,  N-QPINN  and A-QPINN.}
    \label{fig:Ex1-lossl2err-comp}
\end{figure}
Figure~\ref{fig:Ex1-lossl2err-comp} compares the training behavior of the classical A-PINN with the proposed N-QPINN and A-QPINN for solving the 1D Volterra IDE~\eqref{ex1-1dVolterra}. The left panel shows that both quantum models achieve substantially faster loss reduction and attain lower overall loss values than the classical A-PINN, with N-QPINN exhibiting the most efficient convergence. The right panel displays the evolution of the relative $\mathcal{L}_2$-error, where again the quantum approaches consistently outperform the classical baseline throughout training. In particular, the N-QPINN achieves the lowest and most stable error levels, followed by the A-QPINN, whereas the classical A-PINN converges more slowly and maintains comparatively higher error.

\begin{figure}[ht]
    \centering
    \includegraphics[width=0.9\linewidth]{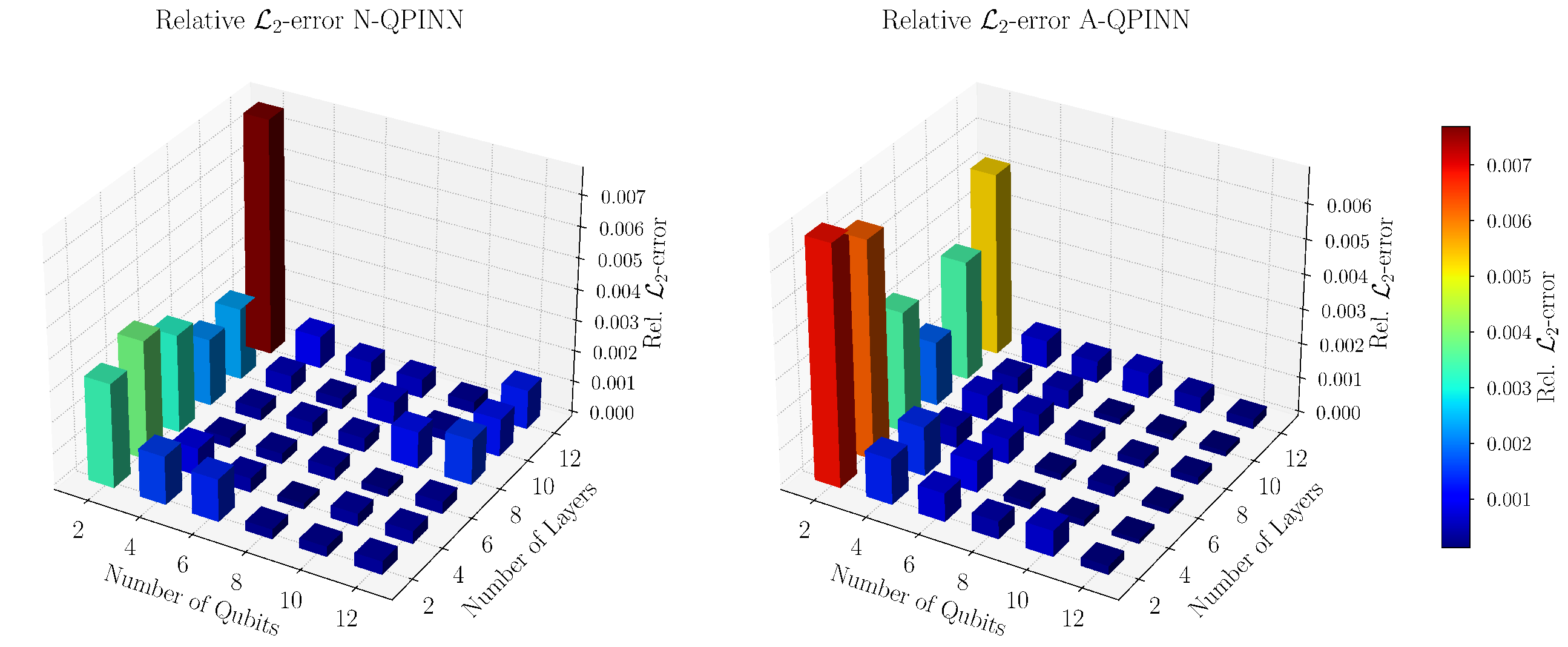}
    \caption{Relative $\mathcal{L}_2$-error of the 1D Volterra IDE~\eqref{ex2-volterra-original} for different QNN architectures obtained by varying the number of variational layers and qubits. Left: N-QPINN; right: A-QPINN.}
    \label{fig:Ex1-3dbarplot}
\end{figure}

To further evaluate the performance of the proposed quantum models, we analyze their predictive accuracies across a range of QNN architectures by varying both the number of variational layers and the number of qubits. Figure~\ref{fig:Ex1-3dbarplot} presents the resulting relative $\mathcal{L}_2$-errors in the form of 3D bar plots. In these experiments, the number of variational layers is varied from $2$ to $12$, and the number of qubits is likewise varied from $2$ to $12$. For the A-QPINN, the associated classical neural network component is fixed to four hidden layers with one hundred neurons in each layer. Across all configurations, the relative $\mathcal{L}_2$-error remains below $10^{-3}$, demonstrating the robustness of both quantum models. The results indicate that increasing the number of qubits per variational layer generally improves solution accuracy, whereas increasing the number of variational layers results in a slight degradation in accuracy. These empirical findings align with theoretical expectations derived from the universal approximation capabilities of QNN.

\subsection{System of nonlinear Volterra integro-differential equation}\label{eexample-2}
Consider the following nonlinear Volterra integro-differential system:
\begin{equation}\label{ex2-volterra-original}
\left\{
\begin{aligned}
    \frac{d^{2}u(x)}{dx^{2}} &= 1 - \frac{x^{3}}{3} - \frac{1}{2} \frac{dv(x)}{dx} 
    + \mu_{1} \int_{0}^{x} \big[ u^{2}(t) + v^{2}(t) \big]\, dt, \\[6pt]  
    \frac{d^{2}v(x)}{dx^{2}} &= -1 + x^{2} - x\, u(x) 
    + \mu_{2} \int_{0}^{x} \big[ u^{2}(t) - v^{2}(t) \big]\, dt, \\[6pt]
    u(0) &= 1, \quad v(0) = -1, \quad \frac{du}{dx}\bigg|_{x=0} = 2, \quad \frac{dv}{dx}\bigg|_{x=0} = 0, 
    \quad x \in [0,1],
\end{aligned}
\right.
\end{equation}
where $\mu_{1}$ and $\mu_{2}$ are prescribed constants. For the choice $\mu_{1} = \tfrac{1}{2}$ and $\mu_{2} = \tfrac{1}{4}$, the exact solution is given by $u(x) = x + e^{x}, \; v(x) = x - e^{x}.$ We solve this system using the N-QPINN and A-QPINN approaches introduced earlier in subsections \ref{subsec-4.1} and \ref{subsec-4.2}, respectively.  For the N-QPINN formulation, the IDEs are handled by numerically approximating the integral operators. The integral quantities
\begin{equation*}
    I_{1}(x) = \int_{0}^{x} \bigl[u^{2}(t)+v^{2}(t)\bigr]\, dt,
\qquad
I_{2}(x) = \int_{0}^{x} \bigl[u^{2}(t)-v^{2}(t)\bigr]\, dt,
\end{equation*}
are evaluated at each collocation point \(x_j\) using a composite trapezoidal quadrature rule.  
For a given \(x_j\), the interval \([0,x_j]\) is partitioned into \(M\) uniform subintervals with step size \(h_j = x_j/M\) and nodes \(t_k = k h_j\), \(k=0,\ldots,M\).  
The corresponding quadrature approximation is
\[
I(x_j) \approx h_j \left[ \tfrac{1}{2} f(t_0)
+ \sum_{k=1}^{M-1} f(t_k)
+ \tfrac{1}{2} f(t_M) \right], \qquad
f(t)=u_\theta^{2}(t)\pm v_\theta^{2}(t),
\]
Substituting these approximations into the governing equations yields the discrete residuals
\[
\widehat{\mathcal{R}}_1(x_j)
=
u_{\theta}''(x_j)
 - \left(
     1 - \frac{x_j^3}{3} - \frac12 v_{\theta}'(x_j)
     + \mu_1\, \widehat{I}_1(x_j)
   \right),
\]
\[
\widehat{\mathcal{R}}_2(x_j)
=
v_{\theta}''(x_j)
 - \left(
     -1 + x_j^2 - x_j u_{\theta}(x_j)
     + \mu_2\, \widehat{I}_2(x_j)
   \right),
\]
where \(\widehat{I}_{1,2}(x_j)\) denote the quadrature-based estimates of the Volterra integrals.  
The N-QPINN loss function then consists of the mean squared residual error over all collocation points together with the initial conditions. To solve the same system using the A-QPINN formulation, we avoid numerical quadrature by introducing auxiliary variables
\begin{equation*}
    w_1(x)=\int_{0}^{x} \bigl[u^{2}(t)+v^{2}(t)\bigr]\,dt,
\qquad
w_2(x)=\int_{0}^{x} \bigl[u^{2}(t)-v^{2}(t)\bigr]\,dt.
\end{equation*}
Differentiation with respect to \(x\) leads to the first-order relations
\begin{equation*}
\begin{aligned}
    w_1'(x)&=u^{2}(x)+v^{2}(x), \qquad w_1(0)=0,\\
    w_2'(x)&=u^{2}(x)-v^{2}(x), \qquad w_2(0)=0.
    \end{aligned}
\end{equation*}
Substituting these auxiliary variables into the original equations produces the equivalent system
\begin{equation*}
    \left\{
\begin{aligned}
u''(x) &= 1 - \frac{x^{3}}{3} - \frac12 v'(x) + \mu_1 w_1(x), \\[2pt]
v''(x) &= -1 + x^{2} - x u(x) + \mu_2 w_2(x), \\[2pt]
w_1'(x) &= u^{2}(x)+v^{2}(x), \\[2pt]
w_2'(x) &= u^{2}(x)-v^{2}(x),
\end{aligned}
\right.
\end{equation*}
which is enforced by a four-output QNN. In this case, the loss function contains the governing differential residuals, the auxiliary relations, and all initial conditions.

To assess the performance of the proposed QPINN approaches, all methods are trained using \(50\) collocation points uniformly sampled from the domain \([0,1]\). The network architectures and quantum circuit configurations are chosen such that the resulting number of trainable parameters is comparable to ensure a fair comparison across the classical A-PINN, N-QPINN, and A-QPINN models. This setup allows for a direct evaluation of the effect of the proposed formulations on approximation accuracy.
\begin{table}[ht]
\centering
\caption{Performance of different methods for the 1D Volterra IDE system~\eqref{ex2-volterra-original}}
\begin{tabular}{m{2cm}cccc}
\toprule
Method 
& \multicolumn{1}{c}{Network} 
& \multicolumn{2}{c}{$\mathcal{L}_2$-error} 
& \multicolumn{1}{c}{Trainable} \\
\cmidrule(lr){3-4}
& Size & $u(x)$ & $v(x)$ & Parameters \\
\midrule
\shortstack{Classical\\A-PINN~\cite{Yuan2022APINN}}  & $6\times 4$ & $3.94 \pm 1.48 \times 10^{-4}$ & $5.29 \pm 2.57 \times 10^{-4}$ & $166$ \\[8pt]
\shortstack{N-QPINN\\(this work)}  & $5 \times 3$ & $1.02 \pm 1.25 \times 10^{-4}$ & $7.57 \pm 1.74 \times 10^{-5}$ & $52$ \\[8pt]
\shortstack{A-QPINN\\(this work)}  & $6 \times 3$ & $5.22 \pm 2.71 \times 10^{-4}$ & $1.45 \pm 1.35 \times 10^{-4}$ & $112$ \\
\bottomrule
\end{tabular}
\label{IDE-System-table}
\end{table}
The quantitative results are summarized in Table~\ref{IDE-System-table}. Both proposed QPINN approaches improve upon the classical A-PINN. In particular, N-QPINN achieves the highest accuracy while using significantly fewer trainable parameters, whereas A-QPINN also demonstrates competitive performance, especially for the second solution component. These results highlight the effectiveness of the proposed quantum-integrated formulations in enhancing predictive accuracy for nonlinear Volterra systems.
\begin{figure}[ht]
    \centering
    \includegraphics[width=1\linewidth]{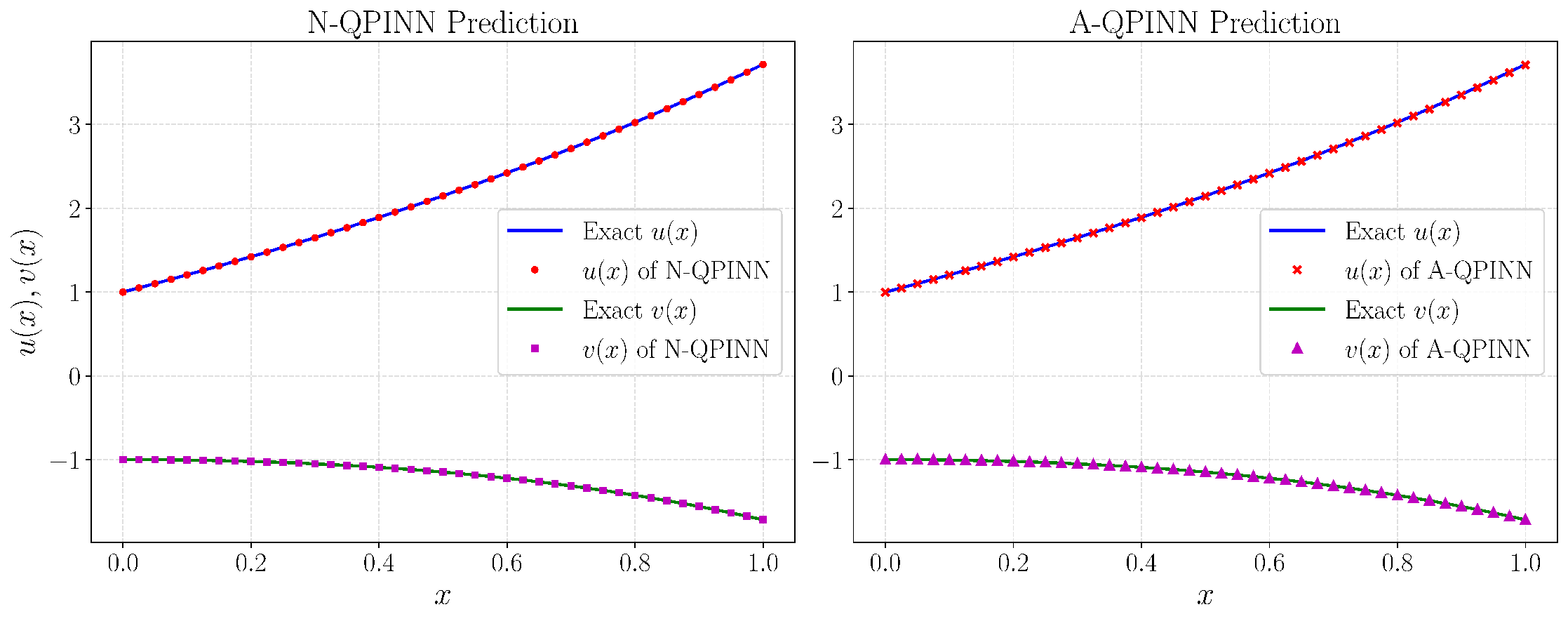}
    \caption{Predicted solutions of the 1D Volterra IDE system \eqref{ex2-volterra-original} using N-QPINN (left) and A-QPINN (right). The solid blue line denotes the exact solution, while red markers represent the network predictions.}
    \label{fig:Ex2-exactvspred}
\end{figure}
Figure~\ref{fig:Ex2-exactvspred} shows the predicted solutions of \(u(x)\) and \(v(x)\) obtained using both N-QPINN and A-QPINN compared with the exact analytical solutions. In each subplot, the solid curves represent the exact solutions, while the discrete markers correspond to the network predictions. Both quantum models accurately recover the qualitative behavior of the system, with N-QPINN exhibiting almost exact overlap with the analytical profiles across the entire interval. The A-QPINN predictions also match the exact solution closely, with slightly larger deviations near the boundary. These visual results are consistent with the quantitative errors reported in Table~\ref{IDE-System-table}.
\subsection{Two-dimensional nonlinear integro-partial differential equation}\label{Example-3}Consider the following nonlinear 2D Volterra integro-partial differential equation:
\begin{equation}\label{ex3-2dvolterra-original}
\left\{
\begin{aligned}
\frac{\partial^2 u(t,x)}{\partial t^2}
&=
\frac{\partial u(t,x)}{\partial x}-\frac{\partial u(t,x)}{\partial t}-u(t,x)+g(t,x)\\
&\qquad\qquad\qquad\qquad+\lambda \int_{0}^{x}\int_{0}^{t}
t\cos(y_1-y_2)\,u(y_1,y_2)\,dy_1dy_2, \\[6pt]
u(0,x) &= x, \qquad
u_t(0,x) = \sin(x), \\[4pt]
u(t,0) &= t\sin t, \qquad (t,x)\in[0,1]\times[0,1].
\end{aligned}
\right.
\end{equation}
Here, the parameter $\lambda$ is fixed as $\lambda=1$, and the forcing term $g(t,x)$ is chosen such that the exact solution is given by
\[
u(t,x)=x+t\sin(t+x).
\]
We solve the problem using the N-QPINN and A-QPINN approaches introduced earlier.  
For the N-QPINN formulation, the nonlocal Volterra operator is approximated numerically.  
At each collocation point $(t_j,x_j)$, the integral term
\[
\int_{0}^{x_j}\int_{0}^{t_j}
t_j\cos(y_1-y_2)\,u(y_1,y_2)\,dy_1dy_2
\]
is evaluated using a two-dimensional product trapezoidal quadrature rule.  
The integration domain $[0,t_j]\times[0,x_j]$ is partitioned into $M_t\times M_x$ uniform subintervals with step sizes
$h_t=t_j/M_t$ and $h_x=x_j/M_x$, and nodes
$y_1^{(m)}=m h_t$, $y_2^{(n)}=n h_x$.  
The quadrature approximation is given by
\[
\widehat{I}(t_j,x_j)
=
t_j
\sum_{m=0}^{M_t}\sum_{n=0}^{M_x}
\omega_m\,\omega_n\,
\cos\!\bigl(y_1^{(m)}-y_2^{(n)}\bigr)\,
u_\theta\!\bigl(y_1^{(m)},y_2^{(n)}\bigr),
\]
where $\omega_m$ and $\omega_n$ denote the trapezoidal weights. Substituting this approximation into \eqref{ex3-2dvolterra-original} yields the discrete residual
\[
\widehat{\mathcal{R}}(t_j,x_j)
=
u_{\theta,tt}(t_j,x_j)
-
\Bigl(
u_{\theta,x}(t_j,x_j)
-
u_{\theta,t}(t_j,x_j)
-
u_\theta(t_j,x_j)
+
g(t_j,x_j)
+
\lambda\,\widehat{I}(t_j,x_j)
\Bigr).
\]
The N-QPINN loss function consists of the mean-squared residual error over all collocation points together with the initial and boundary conditions.

To solve the same problem using the A-QPINN approach, we avoid numerical quadrature by introducing the auxiliary variable
\[
w(t,x)
=
\int_{0}^{x}\int_{0}^{t}
\cos(y_1-y_2)\,u(y_1,y_2)\,dy_1dy_2.
\]
Differentiating with respect to $t$ and $x$ yields the local relation
\[
\frac{\partial^2 w(t,x)}{\partial x\,\partial t}
=
\cos(t-x)\,u(t,x),
\qquad
w(0,x)=w(t,0)=0. 
\]
Using this auxiliary function, the integro-partial differential equation
\eqref{ex3-2dvolterra-original} can be equivalently rewritten as
\begin{equation}
\left\{
\begin{aligned}
\dfrac{\partial^2 u(t,x)}{\partial t^2}&=\dfrac{\partial u(t,x)}{\partial x}-\dfrac{\partial u(t,x)}{\partial t}-u(t,x)+g(t,x)+\lambda\, t\, w(t,x), \\[6pt]
\dfrac{\partial^2 w(t,x)}{\partial x\,\partial t}
&=\cos(t-x)\,u(t,x),\qquad w(0,x)=w(t,0)=0\\[6pt]
u(0,x)= x,&\;u_t(0,x)= \sin(x), \;
u(t,0) = t\sin t, \; (t,x)\in[0,1]\times[0,1].
\end{aligned}
\right.
\end{equation}
This coupled system is enforced using a multi-output quantum neural network. The A-QPINN loss function is defined as $\mathcal{L}_{\text{total}}=\mathcal{L}_{\text{IDE}}+\mathcal{L}_{\text{Aux}}+\mathcal{L}_{\text{BC}}+\mathcal{L}_{\text{IC}}.$

\begin{table}[ht]
\centering
\caption{The performance of different methods for nonlinear 2D Volterra integro-partial differential equation ~\eqref{ex3-2dvolterra-original}}
\begin{tabular}{lccc}
\toprule
Method & Size& $\mathcal{L}_2$-error &  Trainable parameters \\
\midrule
Classical A-PINN~\cite{Yuan2022APINN}  & $6\times 3$ &  $3.21 \pm 2.50\times 10^{-3}$ & $124$ \\
N-QPINN (This work)                    & $6\times 4$  & $1.45\pm2.15\times 10^{-3}$  & $97$ \\
A-QPINN (This work)                    & $6\times 3$  & $5.13\pm1.75\times 10^{-4}$  & $123$ \\
\bottomrule
\end{tabular}
\label{ex-3-table}
\end{table}
The performance for the nonlinear two-dimensional Volterra integro-PDE~\eqref{ex3-2dvolterra-original} is summarized in Table~\ref{ex-3-table}. The classical A-PINN provides a baseline level of accuracy, while both QPINN approaches demonstrate improved performance under comparable model sizes. In particular, A-QPINN achieves the highest accuracy, indicating the effectiveness of the auxiliary-function formulation for this class of nonlinear nonlocal problems. In contrast, N-QPINN yields relatively lower accuracy. 
\begin{figure}[ht]
    \centering
    \includegraphics[width=1\linewidth]{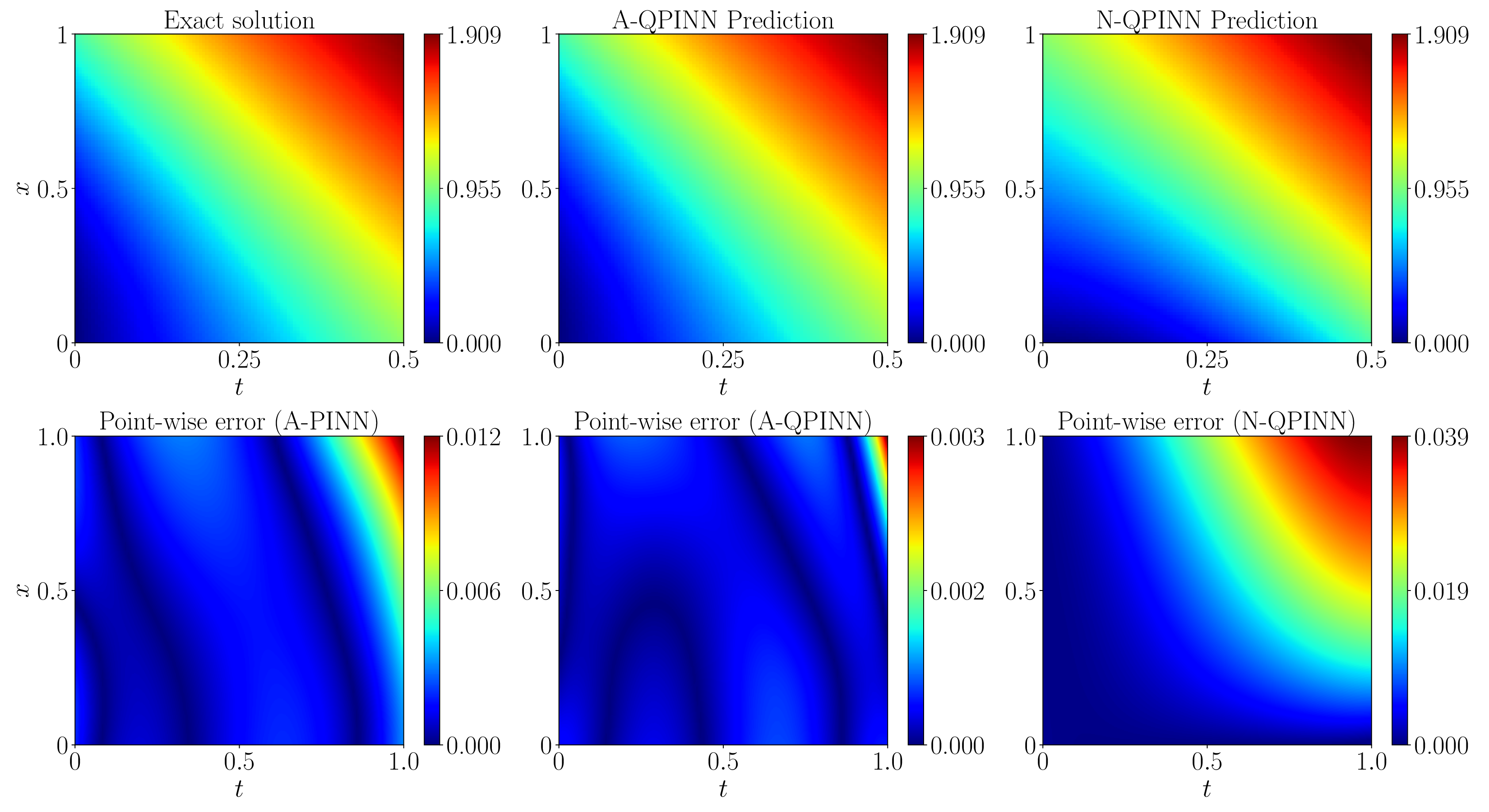}
    \caption{Top row: exact solution (left), A-QPINN prediction (middle), and N-QPINN prediction (right). Bottom row: corresponding point-wise absolute error distributions for the classical A-PINN, A-QPINN, and N-QPINN methods.}
    \label{fig:Ex3}
\end{figure}
Figure~\ref{fig:Ex3} presents the exact solution together with the predictions obtained using A-QPINN and N-QPINN, along with the corresponding pointwise error distributions. The A-QPINN solution exhibits close agreement with the exact profile across the domain, with uniformly small errors. In comparison, the N-QPINN prediction shows larger deviations, particularly in regions where nonlocal effects are more pronounced. These observations are consistent with the quantitative results in Table~\ref{ex-3-table}, further supporting the improved accuracy and robustness of the auxiliary-function-based formulation.
\subsection{nonlinear time-fractional integro–partial differential equation}\label{Example-4}
In this experiment, we investigate a nonlinear time-fractional integro–partial differential equation involving Caputo fractional derivatives, a fractional Volterra integral, diffusion, and a nonlinear reaction term. The governing equation is
\begin{equation}\label{ex4-governing}
\begin{aligned}
D_t^{\alpha_1} u(x,t)
+ D_t^{\alpha_2} u(x,t)
- \frac{\partial^2 u}{\partial x^2}(x,t)&+ \mathcal{I}_t^{\beta} u(x,t)
- u(x,t)^2
= f(x,t), \\
&\qquad\qquad\qquad (x,t)\in[0,1]\times[0,1].
\end{aligned}
\end{equation}
with initial and boundary conditions
\begin{equation}
    u(x,0) = 0,\quad x\in[0,1], 
    \qquad
    u(0,t)=0,\; u(1,t)=0,\quad t\in[0,1].
\end{equation}
The Caputo fractional derivative $D_t^{\alpha}$ and fractional integral $\mathcal{I}_t^{\beta}$ are defined in the equation~\eqref{def-capto} for $\alpha,\beta\in(0,1)$. The source term $f(x,t)$ is chosen so that the exact solution is $u(x,t)=t^{\,4+\alpha_1}\sin(\pi x$). In this example we set $\alpha_1=0.7$, $\alpha_2=0.4$, $\beta=0.2$, and use a temporal grading parameter $\rho=2$.
Here, we solve the integro–fractional model~\eqref{ex4-governing} exclusively using the N-QPINN approach, since the structure of the problem makes both A-PINN and A-QPINN inapplicable. The auxiliary-function strategy used in A-QPINN cannot be extended here because the Caputo operators depend on the full time history of $\partial_s u$, while the fractional integral introduces an additional memory effect. These operators cannot be rewritten as a finite-dimensional first-order system without introducing infinitely many auxiliary states. To implement N-QPINN, the fractional operators $D_t^{\alpha}u(x,t)$ and $\mathcal{I}_t^\beta u(x,t)$ are approximated on a graded temporal mesh,
\begin{equation*}
    t_n=\left(\frac{n}{N}\right)^{\rho}, \qquad n=0,\ldots,N,
\end{equation*}
which improves resolution near $t=0$ to capture the weak singularity inherent in fractional kernels. At each time level $t_n$, the Caputo derivatives are approximated using precomputed convolution-type coefficients
\begin{equation*}
    D_t^{\alpha}u(x,t_n)
    \approx \frac{1}{\Gamma(1-\alpha)}
    \sum_{k=0}^{n} c_{n,k}^{(\alpha)}\,u(x,t_k),
\end{equation*}
while the fractional integral is discretized as
\begin{equation*}
    \mathcal{I}_t^{\beta}u(x,t_n)
    \approx \sum_{k=0}^{n} q_{n,k}^{(\beta)}\,u(x,t_k).
\end{equation*}
The coefficients $c_{n,k}^{(\alpha)}$ and $q_{n,k}^{(\beta)}$ are generated once before training and stored as GPU tensors. For each collocation pair $(x_j,t_n)$, the model evaluations $u_\theta(x_j,t_k)$ for all $k\le n$ are used to assemble the discrete fractional operators. The resulting N-QPINN residual at $(x_j,t_n)$ is
\begin{equation*}
\begin{aligned}
    \widehat{\mathcal{R}}(x_j,t_n)
    &=
    D_t^{\alpha_1}u_\theta(x_j,t_n)
    +D_t^{\alpha_2}u_\theta(x_j,t_n)
    -u_{\theta,xx}(x_j,t_n)\\
    &\qquad\qquad\qquad+\mathcal{I}_t^{\beta}u_\theta(x_j,t_n)-u_\theta(x_j,t_n)^2
    -f(x_j,t_n).
    \end{aligned}
\end{equation*}
The loss function is given by
\begin{equation*}
\begin{aligned}
    \mathcal{L}_{\text{total}}
    =
    \frac{1}{N_R}\sum_{j,n}|\widehat{\mathcal{R}}(x_j,t_n)|^2
    &+\frac{1}{N_{\text{IC}}}\sum_{j}|u_\theta(x_j,0)|^2\\
    &+\frac{1}{N_{\text{BC}}}\sum_{n}\bigl(|u_\theta(0,t_n)|^2+|u_\theta(1,t_n)|^2\bigr).
    \end{aligned}
\end{equation*}
A hybrid quantum architecture with four variational layers and four qubits is employed, preceded and followed by classical linear transformations. 
The interior collocation set consists of 500 graded temporal points and 500 spatial points. The convolutional evaluation of the fractional operators relies on batched model predictions across all necessary historical time levels, enabling efficient training despite strong temporal nonlocality. To quantify the predictive accuracy, the relative $\mathcal{L}_2$-error obtained is $1.87\times10^{-2}$. 
Figure~\ref{fig:Ex4} presents a comparison between the predicted and exact solutions, showing that the learned solution closely follows the exact profile while the pointwise error remains small across most of the domain. 
This experiment demonstrates that N-QPINN can successfully handle nonlinear integro–fractional PDEs involving both Caputo derivatives and fractional Volterra integrals, whereas auxiliary-state formulations such as A-PINN and A-QPINN are not well suited for problems with such strong temporal nonlocality.
\begin{figure}[ht]
    \centering
    \includegraphics[width=1\linewidth]{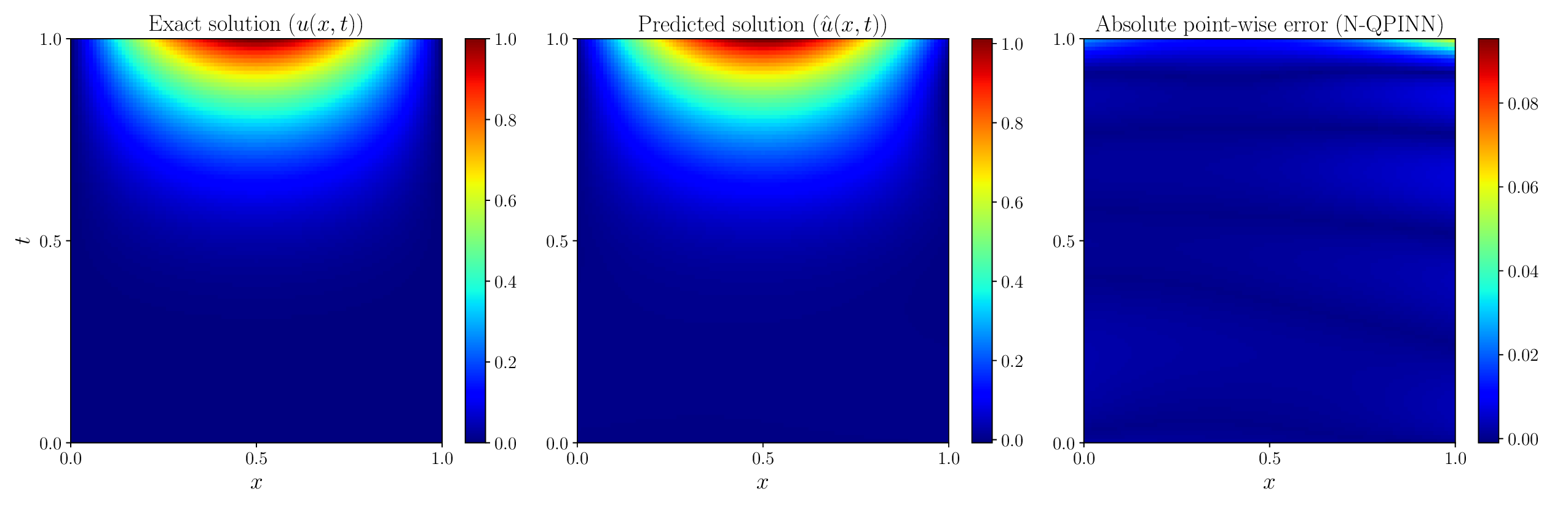}
    \caption{N-QPINN solution of the nonlinear time-fractional integro–partial differential equation~\eqref{ex4-governing}. 
    From left to right: exact solution, N-QPINN prediction, and pointwise absolute error.}
    \label{fig:Ex4}
\end{figure}
Figure~\ref{fig:Ex4} shows the exact solution, the N-QPINN prediction, and the pointwise error. The N-QPINN solution matches the analytical profile well across the domain, with larger discrepancies occurring only near the final time, where the combined effects of fractional memory and nonlinear dynamics are strongest.

\section{Conclusion}\label{sec_concl}
In this work, we developed a comprehensive quantum physics-informed neural network (QPINN) framework for solving nonlinear integro-differential equations and fractional integro–partial differential equations. Building on a constructed quantum neural network architecture, we established approximation results showing that the proposed QNN generates finite trigonometric representations with explicit $\mathcal{L}_2$ error bounds, thereby providing a solid theoretical foundation for its use in physics-informed learning. Based on this framework, we introduced two complementary formulations for handling nonlocal operators. N-QPINN incorporates integral and fractional operators through high-order numerical quadrature while retaining automatic differentiation for local differential terms, enabling a unified treatment of nonlinear IDEs, coupled IDE systems, and FIPDEs. In contrast, A-QPINN avoids numerical quadrature by introducing auxiliary fields that reformulate integro-differential equations as equivalent coupled PDE systems, allowing the constraints to be enforced through multi-output QNNs. A series of numerical experiments demonstrated that both QPINN formulations consistently outperform classical physics-informed neural networks under comparable configurations. The results further indicate that the relative performance of N-QPINN and A-QPINN depends on the structure and dimensionality of the underlying nonlocal operator: N-QPINN is particularly effective for problems involving complex or fractional nonlocal terms, while A-QPINN can achieve superior accuracy for certain multidimensional Volterra-type equations by eliminating quadrature-induced errors. Overall, this work establishes quantum physics-informed neural networks as a viable and theoretically grounded approach for solving complex nonlocal differential equations. The proposed QPINN framework opens several promising directions for future research, including applications in viscoelasticity, anomalous diffusion, heat transfer with memory effects, and nonlocal fluid dynamics, the incorporation of noise-aware quantum training strategies, and deployment on emerging quantum hardware.

\section*{Declarations}
\subsection*{Ethical Approval}
The authors confirm that the present work adheres to the ethical guidelines of the journal. This work has not been published elsewhere and is not under consideration by any other publication.
\subsection*{Funding}
The authors did not receive any specific funding for this work.

\subsection*{Author Information}
Deepak Gupta${}^a$, Himanshu Pandey${}^a$, Ratikanta Behera${}^a$ \\[0.5em]
${}^a$Department of Computational and Data Sciences, Indian Institute of Science, Bangalore, India \\[0.5em]
\textbf{Corresponding author:} Ratikanta Behera

		
\section*{Acknowledgements}
The authors would like to acknowledge the Department of Computational and Data Sciences, Indian Institute of Science, Bangalore, for providing the research environment and facilities necessary to carry out this work. 

\section*{Author Contributions}
\begin{itemize}
    \item \textbf{Deepak Gupta:} Writing - original draft, investigation, methodology, coding. 
    \item \textbf{Ratikanta Behera:} Supervision, visualization, validation, writing review and editing.
\end{itemize}

\bibliographystyle{unsrt}
\bibliography{Ref_DG}	
\end{document}